\theoremstyle{definition}
\newtheorem{theorem}{Theorem}
\newtheorem{Definition}[theorem]{Definition}
\newtheorem{corollary}[theorem]{Corollary}
\newtheorem{Lemma}[theorem]{Lemma}
\newtheorem*{rmk}{Remark}
\numberwithin{theorem}{section}
\newcommand{\Pro}{{\mathbf{P}}}
\newcommand{\Pas}{{\Pro\text{-a.s.}}}
\newcommand{\Fi}{{(\mathcal{F}_t)_{t\ge0}}}
\renewcommand{\labelenumi}{(\arabic{enumi})}
\begin{document}

\title{The well-posedness of the stochastic nonlinear Schr$\ddot{\text{o}}$dinger equations in $H^2(\mathbb{R}^d)$ \footnote{ASM Subject Classifications: 60H15, 35B65, 35J10}}
 
\author{
Isamu D\^oku \\ Department of Mathematics, Faculty of Education, Saitama University \\ 255 Shimo-$\bar{\text{O}}$kubo, Sakura-ku, Saitama City 338-8570, Japan \and Shunya Hashimoto \\ Department of Mathematics, Faculty of Science, Saitama University \\ 255 Shimo-$\bar{\text{O}}$kubo, Sakura-ku, Saitama City 338-8570, Japan  \and Shuji Machihara \\ Department of Mathematics, Faculty of Science, Saitama University \\ 255 Shimo-$\bar{\text{O}}$kubo, Sakura-ku, Saitama City 338-8570, Japan
}

\date{}

\maketitle

\begin{abstract}
The Cauchy problem for the stochastic nonlinear Schr\"odinger equation with multiplicative noise is considered where the nonlinear term is of power type and the noise coeﬃcients are purely imaginary numbers. The main purpose of this paper is to construct classical solutions in $H^2(\mathbb{R}^d)$ for the problem. The techniques of Kato \cite{K87,K89} work well in overcoming smoothness problems even for the stochastic equations. 
\end{abstract}


\section{Introduction and main results}
\subsection{Stochastic and Deterministic nonlinear Schr\"odinger equations}
We consider the Cauchy problem for the stochastic nonlinear Schr\"odinger equation (SNLS) with
a multiplicative noise in the general spatial dimension $d\in\mathbb{N}$:
\begin{eqnarray}
\label{SNLS}
\begin{cases}
idX(t,\xi)=\Delta X(t,\xi)dt+\lambda|X(t,\xi)|^{\alpha-1}X(t,\xi)dt\\
\hspace{5em} -i\mu(\xi)X(t,\xi)dt+iX(t,\xi)dW(t,\xi), \quad &t\in(0,T), \ \xi\in \mathbb{R}^d, \\ 
X(0,\xi)=x(\xi), &\xi\in \mathbb{R}^d,
\end{cases}
\end{eqnarray}
where the constants $\lambda =\pm 1, \ \alpha>1$. The Wiener process $W$ and $\mu$ are 
the functions as follows, where  
\begin{align}
W(t,\xi)&=\sum_{j=1}^Ni\phi_j(\xi)\beta_j(t), \quad t\ge0, \ \xi\in \mathbb{R}^d, \label{def-W}\\
\mu(\xi)&=\frac{1}{2}\sum_{j=1}^N\phi_j^2(\xi), \quad \xi \in \mathbb{R}^d. \label{def-mu}
\end{align}
Here, $N\in \mathbb{N}\cup \{ +\infty \}$, and the 
$\phi_j(\xi)$ are real-valued functions.  
The $\beta_j(t)$ are real-valued independent Brownian motions with respect to a 
probability space $(\Omega,\mathcal{F},\Pro)$ with natural filtration 
$(\mathcal{F}_t)_{t\ge0}, \ 1\le j\le N$. This equation was introduced by \cite{BRZ14,BRZ16}. In this paper we assume $N<\infty$ as in \cite{BRZ14,BRZ16}. But our techniques easily go over to the case where $N=+\infty$ (i.e. infinite dimensional noise). We refer to \cite[Remark 2.3.13]{Z14} for details. 

There are a number of papers with results on the deterministic nonlinear Schr\"odinger equation which is given as in the case 
$W=0$ in \eqref{SNLS}, \eqref{def-W} and \eqref{def-mu}. 
We introduce the typical condition for the power of nonlinearity,
\begin{align}
\label{power-condition}
1<\alpha<1+\frac{4}{(d-2s)^+},
\end{align}
for the time local well-posedness in $H^s(\mathbb{R}^d), s\ge0$. Here,
\begin{align*}
\frac{1}{h^+}=
\begin{cases}
\frac{1}{h}, & h>0, \\
\infty, & h\le0.
\end{cases}
\end{align*}
This condition is derived from a scaling argument, and actually the time local well-posedness was 
shown for $s=1$ by Ginible-Velo \cite{GV79}, for $s=0$ by Tsutsumi \cite{T87}, for $s=2$ by Kato \cite{K87,K89} and for the general $0\le s<d/2$ 
by Cazenave-Weissler \cite{CW90}. We call the case $s=0$ charge class, $s=1$ energy class respectively since there are 
the conservation laws for $L^2(\mathbb{R}^d)$ and $H^1(\mathbb{R}^d)$ respectively. 
We sometimes call the case $s=2$ classical solution class since the nonlinear Schr\"odinger equation is a second-order partial differential equation with the Laplacian in $x$. 

So a natural question arises. Can we solve the stochastic nonlinear Schr\"odinger equation under 
the condition of \eqref{power-condition}?
There are some results. The conservative case 
(i.e. the special case of purely 
imaginary noise $\text{Re}W=0$) was studied in \cite{BD99,BD03}. In \cite{BD99} local existence and uniqueness of solutions in $L^2(\mathbb{R}^d)$ were proved for $\alpha$ satisfying
\begin{eqnarray}
\begin{cases}
1<\alpha <1+\frac{4}{d} & d=1,2, \\
1<\alpha <1+\frac{2}{d-1} & d\ge 3,
\end{cases}
\end{eqnarray}
and in \cite{BD03} local existence and uniqueness of solutions in $H^1(\mathbb{R}^d)$ were proved for $\alpha$ satisfying
\begin{eqnarray}
\begin{cases}
1<\alpha <\infty & d=1,2, \\
1<\alpha <5 & d=3, \\
1<\alpha <1+\frac{2}{d-1} \ \text{or} \ 2\le \alpha <1+\frac{4}{d-2} & d=4,5, \\
1<\alpha <1+\frac{2}{d-1} & d\ge 6.
\end{cases}
\end{eqnarray}

In the non-conservative case, V. Barbu, M. R$\ddot{\text{o}}$ckner and D. Zhang \cite{BRZ14, BRZ16} solved the time local well-posedness in $L^2({\mathbb{R}^d})$ 
and $H^1({\mathbb{R}^d})$ under the full condition \eqref{power-condition} 
by using the rescaling approach with an additional condition (see below (H1$)_s$).
This transformation reduces the stochastic nonlinear Schr\"odinger equation to an equivalent random Schr\"odinger equation. 

See also \cite{BRZ18, BHM20, BHW19, BHW22, HRZ19, H20, SZ23} for further work on the stochastic nonlinear Schr\"odinger equation.

In this paper, we study the well-posedness in $H^2(\mathbb{R}^d)$ in the conservation case by making use of the rescaling approach as a main tool for dealing with the multiplicative noise, where we need to take advantage of a slight modiﬁcation of the deterministic Strichartz estimates adapted to the setting of $H^2(\mathbb{R}^d)$. 
Unlike the $L^2$ and $H^1$-solutions, the difficulties in proving the $H^2$-solution are the smoothness argument for the function of the nonlinear terms and the modification of Kato's technique \cite{K87,K89} by the modified evolution operator.

\subsection{Results by the rescaling approach}
In this subsection 
we introduce the four results on the time local well-posedness for \eqref{SNLS} by using the rescaling approach.
The two results in Theorem \ref{Hslocal} for $s=0,1,$ are already known, and the other two results for $s=2$ with different settings in Theorems \ref{mainH2} and \ref{corH2} are the main results in this paper.
To state those results precisely, we introduce an assumption on $\{ \phi_j \}_{j=1}^N$ as follows. We assume some decay conditions.

(H1$)_s$ $\phi_j \in C^{\infty}_b(\mathbb{R}^d)$ such that
\[ \lim_{|\xi|\to \infty}\zeta(\xi)|\partial^{\gamma}\phi_j(\xi)|=0, \]
\qquad where $\gamma$ is a multi-index such that $|\gamma|\le s+2, \ 1\le j\le N$ and 
\begin{eqnarray*}
\zeta (\xi)=
\begin{cases}
1+|\xi|^2, & \text{if} \ d\not= 2; \\
(1+|\xi|^2)(\log (3+|\xi|^2))^2, & \text{if} \ d=2.
\end{cases}
\end{eqnarray*}
\begin{rmk}
We remark that the assumption (H1$)_s$ is the almost same with \cite{BRZ14,BRZ16}. We assume $|\gamma|\le 4$ on the derivative of $\phi_j$ though it was assumed $|\gamma|\le 2$ in \cite{BRZ14} and $|\gamma|\le 3$ in \cite{BRZ16}. 
\end{rmk}
We define the notion of solvability for the stochastic equation.
\begin{Definition}
\label{Xdef}
Let $x\in H^s \ (s=0,1,2)$ and let $\alpha>1$. Fix $0<T<\infty$. A solution of \textnormal{(\ref{SNLS})} is a pair $(X,\tau)$, where $\tau(\le T)$ is an $(\mathcal{F}_t)$-stopping time, and $X=(X(t))_{t\in [0,T]}$ is an $H^s$-valued continuous $(\mathcal{F}_t)$-adapted process, such that $|X|^{\alpha-1}X\in L^1(0,\tau;H^{s-2})$, $\Pas$, and it satisfies $\Pas$ 
\begin{align}
\label{WF}
X(t)=x-\int_0^{t\wedge\tau}(i\Delta X(r)+\mu X(r)+\lambda i|X(r)|^{\alpha-1}X(r))dr+\int_0^{t\wedge\tau}X(r)dW(r), \quad t\in [0,T],
\end{align}
as an equation in $H^{s-2}$.
\end{Definition}
We say that uniqueness for (\ref{SNLS}) holds
in the function space $S$, if for any two solutions of (\ref{SNLS}) $(X_i,\tau_i), \ X_i\in S, \ i=1,2$, it holds $\Pas$ that $X_1=X_2 \ \text{on} \ [0,\tau_1\wedge \tau_2]$.

We state the two known results for $s=0$ and $s=1$ 
which were shown in \cite[Theorem 2.2]{BRZ14} 
and \cite[Theorem 2.1]{BRZ16} respectively in the following theorem.
\begin{theorem}[$H^s$ local well-posedness for $s=0,1$]
\label{Hslocal}
\quad \\
Let $s=0$ or $1$. Assume (H1$)_s$. Let $\alpha$ satisfy (\ref{power-condition}). Then, for each $x\in H^s$ and $0<T<\infty$, there is a sequence of local solutions $(X_n,\tau_n)$ of \textnormal{(\ref{SNLS})}, $n\in \mathbb{N}$, where $\tau_n$ is a sequence of increasing stopping times. For every $n\ge 1$, it holds $\Pas$ that 
\begin{align*}
X_n|_{[0,\tau_n]}\in C([0,\tau_n];H^s)\cap L^{\gamma}(0,\tau_n;W^{s,\rho}),
\end{align*}
and uniqueness holds in the function space $C([0,\tau_n];H^s)\cap L^{\gamma}(0,\tau_n;W^{s,\rho})$ where $(\rho,\gamma)$ is any Strichartz pair. 
Here, the Strichartz pair for spatial dimension $d=1,2,3,\cdots,$ is defined by
\begin{eqnarray*}
(p_i,q_i)\in [2,\infty]\times [2,\infty]:\frac{2}{q_i}=\frac{d}{2}-\frac{d}{p_i}, \quad \text{if} \ d\not= 2, \\
(p_i,q_i)\in [2,\infty)\times (2,\infty]:\frac{2}{q_i}=\frac{d}{2}-\frac{d}{p_i}, \quad \text{if} \ d=2.
\end{eqnarray*}
Moreover, defining $\displaystyle \tau^*(x)=\lim_{n\to \infty}\tau_n$ and $\displaystyle X=\lim_{n\to \infty}X_n\mathbf{1}_{[0,\tau^*(x))},$ for $n\ge1$ and $\Pas \ \omega \in \Omega$, the map $x\to X(\cdot,x,\omega)$ is continuous from $H^s$ to $L^{\infty}(0,\tau_n;H^s)\cap L^{\gamma}(0,\tau_n;W^{s,\rho})$ where $(\rho,\gamma)$ is any Strichartz pair. Furthermore,  we have the blowup alternative, that is, for $\Pas \ \omega$, if $\tau_n(\omega)<\tau^*(x)(\omega), \ \forall n\in \mathbb{N}$, then 
\[ \lim_{t\to \tau^*(x)(\omega)}\|X(t)(\omega)\|_{H^s}=\infty. \]
\end{theorem}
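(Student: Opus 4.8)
The plan, following \cite{BRZ14,BRZ16}, is to remove the stochastic integral from \eqref{WF} by a rescaling of the unknown, turning \eqref{SNLS} into a random (pathwise) Schr\"odinger equation with a variable-coefficient elliptic operator, and then to run the deterministic $H^s$ fixed-point argument path by path. Since $\mathrm{Re}\,W=0$, for each $t\ge0$ and $\omega$ the multiplication operator $e^{-W(t)}:=\exp\!\bigl(-\textstyle\sum_{j=1}^{N}i\phi_j\beta_j(t)\bigr)$ is a unitary isomorphism of $H^\sigma$ for every $\sigma$, and $\bigl|e^{\pm W(t,\xi)}\bigr|=1$. Setting $y(t):=e^{-W(t)}X(t)$ and applying It\^o's formula, using $(dW)^2=-\sum_{j}\phi_j^2\,dt=-2\mu\,dt$, one gets $d\,e^{-W}=-e^{-W}\,dW-\mu\,e^{-W}\,dt$ and a cross term $(d\,e^{-W})(dX)=2\mu\,e^{-W}X\,dt$; adding these to $e^{-W}\,dX$, the $dW$-terms and every $\mu$-drift cancel exactly, and $\Pas$
\[
\partial_t y=-i\,\widetilde{\Delta}(t)\,y-\lambda i\,|y|^{\alpha-1}y,\qquad y(0)=x,
\]
where $\widetilde{\Delta}(t)u:=e^{-W(t)}\Delta\!\bigl(e^{W(t)}u\bigr)=\Delta u+2\,\nabla W(t)\!\cdot\!\nabla u+\bigl(\Delta W(t)+|\nabla W(t)|^2\bigr)u$. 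The nonlinearity is untouched, because $|e^{W}y|=|y|$ and multiplication operators commute, so $e^{-W}|e^{W}y|^{\alpha-1}e^{W}y=|y|^{\alpha-1}y$. The lower-order coefficients of $\widetilde{\Delta}(t)$ are $C^\infty_b$ in $\xi$, continuous in $t$ (here one uses $N<\infty$ and continuity of the $\beta_j$), and decay at spatial infinity by (H1$)_s$. The map $y\mapsto X:=e^{W}y$ is inverse to $X\mapsto y$, sends solutions of the rescaled equation to solutions of \eqref{SNLS}, preserves the class $C([0,\tau];H^s)\cap L^\gamma(0,\tau;W^{s,\rho})$, and preserves the uniqueness statement; so it suffices to solve the rescaled equation pathwise.

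The second step is to construct, for $\Pas\ \omega$, the evolution family $\{U(t,s)\}_{0\le s\le t\le T}$ generated by $-i\widetilde{\Delta}(t)$ and to prove that it satisfies the same Strichartz estimates as $e^{it\Delta}$ in the $W^{s,\cdot}$ scale: for every admissible $(\rho,\gamma)$ and dual $(\rho_1',\gamma_1')$,
\[
\|U(\cdot,0)x\|_{L^\gamma(0,T;W^{s,\rho})}\le C\|x\|_{H^s},\qquad
\Bigl\|\int_0^{\cdot}U(\cdot,r)f(r)\,dr\Bigr\|_{L^\gamma(0,T;W^{s,\rho})}\le C\|f\|_{L^{\gamma_1'}(0,T;W^{s,\rho_1'})},
\]
with $C=C(\omega,T)$ finite $\Pas$, measurable in $\omega$, and locally bounded in $T$. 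Because $-i\widetilde{\Delta}(t)$ is a spatially decaying first- and zeroth-order perturbation of $-i\Delta$, this is the perturbative parametrix / local-smoothing construction of \cite{BRZ14,BRZ16}, where (H1$)_s$ guarantees a non-trapping, asymptotically flat metric perturbation so that the free Strichartz bounds survive.

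Given these estimates, I would solve the rescaled equation by a pathwise contraction of the Duhamel map $(\Gamma y)(t):=U(t,0)x-\lambda i\int_0^{t}U(t,r)|y(r)|^{\alpha-1}y(r)\,dr$ on a ball of $C([0,\tau];H^s)\cap L^\gamma(0,\tau;W^{s,\rho})$; the range \eqref{power-condition} is precisely what makes the nonlinear estimates $\bigl\||u|^{\alpha-1}u-|v|^{\alpha-1}v\bigr\|_{L^{\gamma_1'}W^{s,\rho_1'}}\lesssim(\|u\|^{\alpha-1}+\|v\|^{\alpha-1})\|u-v\|$ fit into the Strichartz framework. The required smallness of the time interval depends only on $\|x\|_{H^s}$ and on the ($\Pas$ finite) constant $C(\omega,T)$, so it can be met by a genuine $(\mathcal{F}_t)$-stopping time $\tau_1$; concatenating such local solutions produces the increasing sequence of stopping times $\tau_n$ and the maximal existence time $\tau^*(x):=\lim_n\tau_n$. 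The Lipschitz dependence of $\Gamma$ on $x$, together with a Gronwall argument, gives continuity of $x\mapsto X(\cdot,x,\omega)$ into $L^\infty(0,\tau_n;H^s)\cap L^\gamma(0,\tau_n;W^{s,\rho})$; and the blowup alternative is the standard byproduct of local well-posedness, since a finite $\tau^*(\omega)$ with $\|X(t_k)(\omega)\|_{H^s}$ bounded along some $t_k\uparrow\tau^*(\omega)$ would let the fixed point restart beyond $\tau^*(\omega)$. Transferring everything back through $X=e^{W}y$ yields the theorem; the pathwise deterministic core is, for $s=0$, the argument of Tsutsumi \cite{T87} and, for $s=1$, that of Ginibre--Velo \cite{GV79}.

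The main obstacle will be the Strichartz step: obtaining, for the randomized variable-coefficient propagator $U(t,s)$, the same space-time estimates as for $e^{it\Delta}$ — absorbing in particular the drift $2\nabla W(t)\!\cdot\!\nabla$ and the potential $\Delta W(t)+|\nabla W(t)|^2$ uniformly on $[0,T]$ — while keeping all constants finite $\Pas$ and $\mathcal{F}_T$-measurable. This is where the decay hypothesis (H1$)_s$ is consumed; everything else is a pathwise repetition of the deterministic $H^s$ local theory.
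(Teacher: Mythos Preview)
The paper does not give its own proof of this theorem: it is stated as a known result, attributed to \cite[Theorem 2.2]{BRZ14} for $s=0$ and \cite[Theorem 2.1]{BRZ16} for $s=1$. Your sketch correctly reproduces the rescaling strategy of those references --- the transformation $y=e^{-W}X$, the resulting random equation with generator $A(t)=-i\widetilde{\Delta}(t)$, the perturbed Strichartz estimates for the evolution family $U(t,s)$ under (H1$)_s$, and the pathwise fixed-point argument with stopping times --- and this is exactly the framework the present paper recalls in Sections~2 and~3 before extending it to $s=2$.
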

Now we state our main theorems. The main theorem is shown in the range of the following exponents.
\begin{align}
\label{H2power}
\begin{cases}
1<\alpha<\infty, & 1\le d\le 4, \\
1<\alpha<1+\frac{2}{d-2} \ \text{or} \ 2\le \alpha<1+\frac{4}{d-4}, & 5\le d\le 7, \\
1<\alpha<1+\frac{2}{d-2}, & d\ge 8.
\end{cases}
\end{align}
\begin{theorem}[$H^2$ local well-posedness]
\label{mainH2}
\quad \\
Assume (H1$)_2$ and let $\alpha$ satisfy (\ref{H2power}). Also, assume that $\phi_j$ is a first-order polynomial for any $j=1,2,\cdots,N$. Then, for each $x\in H^2$ and $0<T<\infty$, there is a sequence of local solutions $(X_n,\tau_n)$ of \textnormal{(\ref{SNLS})}, $n\in \mathbb{N}$, where $\tau_n$ is a sequence of increasing stopping times. For every $n\ge 1$, it holds $\Pas$ that 
\begin{align*}
X_n|_{[0,\tau_n]}\in C([0,\tau_n];H^2),
\end{align*}
and uniqueness holds in the function space $C([0,\tau_n];H^2)$. Moreover, defining $\displaystyle \tau^*(x)=\lim_{n\to \infty}\tau_n$ and $\displaystyle X=\lim_{n\to \infty}X_n\mathbf{1}_{[0,\tau^*(x))},$ for $n\ge1, \ \Pas \ \omega \in \Omega$ and $0\le s<2$, the map $x\to X(\cdot,x,\omega)$ is continuous from $H^2$ to $L^{\infty}(0,\tau_n;H^s)$.
\end{theorem}
\begin{rmk}
The proof of Theorem \ref{mainH2} is based on Kato's technique \cite{K87,K89}.
Unlike the $H^2$-solution of the deterministic Schr\"odinger equation, the evolution operator modified by the rescaling approach is not commutative with $\nabla$ and $\Delta$ and is not a semigroup.
This causes the gap of exponent $\alpha$ in \eqref{H2power}.
This gap came from the condition of Sobolev's embedding.
And, we note that we do not use Kato's technique when the exponent $\alpha$ is greater than or equal to 2.
\end{rmk}
If we assume that the exponent $\alpha$ is greater than or equal to 2, not only can the assumption about $\phi_j$ be removed, but it also improves the properties of the solution as follows.
We can show strictly the space to which the solution belongs and the continuous dependence of the initial data.
\begin{theorem}[$H^2$ local well-posedness for sufficiently smooth nonlinearities]
\label{corH2}
\quad \\
Assume $d\le 7$ and (H1$)_2$. Let $\alpha$ satisfy $2\le \alpha <1+\frac{4}{(d-4)^+}.$ 
Then, for each $x\in H^2$ and $0<T<\infty$, there is a sequence of local solutions $(X_n,\tau_n)$ of \textnormal{(\ref{SNLS})}, $n\in \mathbb{N}$, where $\tau_n$ is a sequence of increasing stopping times. For every $n\ge 1$, it holds $\Pas$ that 
\begin{align*}
X_n|_{[0,\tau_n]}\in C([0,\tau_n];H^2)\cap L^q(0,\tau_n;W^{2,p}),
\end{align*}
and uniqueness holds in the function space 
$C([0,\tau_n];H^2)$ where $(p,q)$ is given by
\begin{align*}
\begin{cases}
\text{any Strichartz pair} & d=1,2,3, \\
(p,q)=(\frac{2(\alpha+2)}{\alpha+1},\alpha+2) & d=4, \\
(p,q)=(\frac{d(\alpha+1)}{d+2\alpha-2},\frac{4(\alpha+1)}{(d-4)(\alpha-1)}) & d=5,6,7.
\end{cases}
\end{align*}
Moreover, defining $\displaystyle \tau^*(x)=\lim_{n\to \infty}\tau_n$ and $\displaystyle X=\lim_{n\to \infty}X_n\mathbf{1}_{[0,\tau^*(x))},$ for $n\ge1$ and $\Pas \ \omega \in \Omega$, the map $x\to X(\cdot,x,\omega)$ is continuous from $H^2$ to $L^{\infty}(0,\tau_n;H^2)\cap L^q(0,\tau_n;W^{2,p})$. Furthermore, we have the blowup alternative, that is, for $\Pas \ \omega$, if $\tau_n(\omega)<\tau^*(x)(\omega),$ \ for every $n\in \mathbb{N}$, then 
\[ \lim_{t\to \tau^*(x)(\omega)}\|X(t)(\omega)\|_{H^2}=\infty. \]
\end{theorem}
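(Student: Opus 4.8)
\noindent\emph{Proof proposal.} The plan is to reduce \eqref{SNLS} to a pathwise random Schr\"odinger equation by the rescaling (Doss--Sussmann) transformation, and then to solve that equation for each fixed $\omega$ by a Strichartz fixed-point argument in an $H^2$-based space. Since here $\alpha\ge2$, the nonlinearity $z\mapsto|z|^{\alpha-1}z$ is of class $C^2$, so the more delicate Kato-type construction used in Theorem \ref{mainH2} is not needed. Concretely, set $y(t)=e^{-W(t)}X(t)$. Since $W(t)=\sum_{j=1}^N i\phi_j\beta_j(t)$ is purely imaginary, $e^{\pm W(t)}$ is unimodular and hence $|y(t)|=|X(t)|$. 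An application of It\^o's formula — in which the quadratic variation term $\tfrac12 X\,(dW)^2=-\mu X\,dt$ cancels the drift $-\mu X\,dt$ present in \eqref{WF} — shows that $y$ satisfies
\begin{align*}
\partial_t y=-iA(t)y-\lambda i|y|^{\alpha-1}y,\qquad y(0)=x,
\end{align*}
where
\begin{align*}
A(t)y:=e^{-W(t)}\Delta\!\left(e^{W(t)}y\right)=\Delta y+2\nabla W(t)\!\cdot\!\nabla y+\bigl(\Delta W(t)+\nabla W(t)\!\cdot\!\nabla W(t)\bigr)y .
\end{align*}
By (H1$)_2$, for every $\omega$ and $t$ this is a second-order elliptic operator whose lower-order coefficients are smooth, bounded, and decaying together with their derivatives up to order two; conversely, a solution $y$ of this equation yields, via $X=e^{W}y$, a solution of \eqref{SNLS} in the sense of Definition \ref{Xdef}. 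It therefore suffices to solve the random equation pathwise and transport the conclusions back through $X=e^{W}y$, multiplication by $e^{W(t)}$ preserving $C([0,\tau];H^2)\cap L^q(0,\tau;W^{2,p})$ thanks again to (H1$)_2$ and unimodularity.

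For each $\omega$, let $U(t,s)=U^\omega(t,s)$ be the evolution operator generated by $-iA(t)$, equipped with the $H^2$-adapted Strichartz estimates established earlier in the paper, and write the equation in Duhamel form
\begin{align*}
y(t)=U(t,0)x-\lambda i\int_0^t U(t,r)\bigl(|y(r)|^{\alpha-1}y(r)\bigr)\,dr.
\end{align*}
I would run a contraction for the right-hand map on a ball of $Z_{T_0}:=C([0,T_0];H^2)\cap L^q(0,T_0;W^{2,p})$, with $(p,q)$ as in the statement and $T_0=T_0(\|x\|_{H^2},\omega)$ small. The crucial nonlinear input, available because $\alpha\ge2$, is the pointwise bound $\bigl|D^2(|z|^{\alpha-1}z)\bigr|\lesssim|z|^{\alpha-2}\bigl(|D^2z|+|Dz|^2\bigr)$; distributing the $\alpha-1$ extra factors of $|z|$ and applying H\"older together with the Sobolev embeddings of $W^{2,p}$ valid on $\mathbb{R}^d$ for $d\le7$ yields $\bigl\||y|^{\alpha-1}y\bigr\|_{L^{q'}(0,T_0;W^{2,p'})}\lesssim T_0^{\theta}\,\|y\|_{Z_{T_0}}^{\alpha}$ for some $\theta>0$, together with a matching Lipschitz bound for differences. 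Combined with the inhomogeneous Strichartz estimate for $U$, this makes the map a contraction on a small ball once $T_0$ is small enough; $T_0$ depends on $\omega$ only through the supremum over $[0,T]$ of the norms of the coefficients of $A$, so it can be chosen to be a stopping time and the Picard iterates, hence the fixed point, are adapted.

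The increasing stopping times $\tau_n$ are produced by the usual truncation device: one solves globally on $[0,T]$ the equation with nonlinearity cut off at level $n$ (iterating the contraction over consecutive intervals), and sets $\tau_n$ to be the first time the $H^2$-norm reaches $n$; uniqueness for the truncated problems makes these solutions consistent, and $X_n:=e^{W}y_n$ is then a local solution of \eqref{SNLS} with $X_n|_{[0,\tau_n]}\in C([0,\tau_n];H^2)\cap L^q(0,\tau_n;W^{2,p})$. For uniqueness in $C([0,\tau];H^2)$, one first checks that any solution with that regularity, transformed to $y\in C([0,\tau];H^2)$, automatically belongs to $L^q_{\mathrm{loc}}(0,\tau;W^{2,p})$ (insert $C_tH^2$, via Sobolev embedding, into the Duhamel formula and the Strichartz bound), which reduces uniqueness to the class $Z$, where it follows from the difference estimate and Gr\"onwall on short intervals, bootstrapped. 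The blowup alternative follows since the existence time delivered by the contraction is bounded below by a positive function of $\|X(t_0)\|_{H^2}$ and of the coefficients of $A$ (locally bounded in $t$): were $\tau^*(x)(\omega)<\infty$ with $\limsup_{t\uparrow\tau^*}\|X(t)(\omega)\|_{H^2}<\infty$, one could restart the solution past some $\tau_n<\tau^*$, a contradiction. Finally, continuous dependence of $x\mapsto X$ into $L^{\infty}(0,\tau_n;H^2)\cap L^q(0,\tau_n;W^{2,p})$ is inherited from the Lipschitz dependence of the fixed point on its data, concatenated over the finitely many short intervals composing $[0,\tau_n]$.

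The main obstacle is twofold. First, because $A(t)$ is time-dependent with random coefficients and $U(t,s)$ is neither a semigroup nor commuting with $\nabla$ and $\Delta$, the $W^{2,p}$-Strichartz estimates cannot be read off from the free Schr\"odinger group: one must commute two derivatives through $A(t)$, generating lower-order terms carrying the coefficients $\nabla W$, $\Delta W$, $\nabla W\!\cdot\!\nabla W$, whose control is exactly what (H1$)_2$ with $|\gamma|\le4$ provides — this is the content of the modified Strichartz estimate assumed available from the earlier part of the paper. Second, the nonlinear estimate in $W^{2,p'}$ requires differentiating twice, so it genuinely needs $\alpha\ge2$ (for the $C^2$ regularity of $z\mapsto|z|^{\alpha-1}z$) and a careful matching of H\"older exponents with Sobolev embeddings, which is precisely what restricts $d\le7$ and pins down the admissible pairs $(p,q)$ listed in the theorem. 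The remaining steps — measurability and adaptedness, justification of the It\^o transformation, the Gr\"onwall and concatenation arguments — are routine once these two points are in place.
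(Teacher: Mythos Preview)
Your overall strategy --- rescaling to the random equation for $y=e^{-W}X$, then a pathwise Strichartz fixed-point on a ball of $C_tH^2\cap L^q_tW^{2,p}$ with the specific pairs listed --- is exactly what the paper does (via Theorem~\ref{Xy} and Theorem~\ref{ycor}), and your identification of the two real obstacles (the $W^{2,p}$-Strichartz estimate for the non-autonomous $U(t,s)$, and the $C^2$ regularity of $g$ requiring $\alpha\ge2$) is correct. However, there is a genuine gap in your contraction and continuous-dependence steps when $2\le\alpha<3$. You assert ``a matching Lipschitz bound for differences'' in $Z_{T_0}$ and then that continuous dependence ``is inherited from the Lipschitz dependence of the fixed point on its data''. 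But $\Delta g(y)$ contains terms of type $\nabla F_j(y)\nabla y$ with $|\nabla F_j(y)|\sim|y|^{\alpha-2}|\nabla y|$, and estimating $\Delta g(y_1)-\Delta g(y_2)$ in $L^{p'}$ forces control of quantities like $\bigl||y_1|^{\alpha-2}-|y_2|^{\alpha-2}\bigr|$, which for $\alpha<3$ is only $(\alpha-2)$-H\"older, not Lipschitz. So $F$ is \emph{not} a contraction in the full $W^{2,p}$ metric, and $H^2$-continuous dependence does not drop out of a Lipschitz fixed-point bound. (A minor slip as well: your pointwise bound should read $|D^2g(z)|\lesssim|z|^{\alpha-1}|D^2z|+|z|^{\alpha-2}|Dz|^2$; the coefficient of $|D^2z|$ is $|z|^{\alpha-1}$.)

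The paper handles this in the standard NLS way. For existence, $F$ is shown to map the ball $\mathcal{Y}^{\tau_1}_{M_1}\subset L^\infty_tH^2\cap L^q_tW^{2,p}$ into itself, but the contraction is established only in the \emph{weaker} metric $L^\infty_tL^2\cap L^q_tL^p$; the ball is complete in that metric, so Banach's theorem still applies. For continuous dependence in $H^2$, the paper writes $\Delta g(y_m)-\Delta g(y)=\sum_{k=1}^8 I_k$: the terms $I_1,I_3,I_5,I_7$ of the form $F_j(y_m)[\Delta y_m-\Delta y]$ or $\nabla F_j(y_m)[\nabla y_m-\nabla y]$ are absorbed into the left side for $t$ small, while the ``bad'' terms $I_2,I_4,I_6,I_8$ of the form $[F_j(y_m)-F_j(y)]\Delta y$ and $[\nabla F_j(y_m)-\nabla F_j(y)]\nabla y$ are shown to tend to zero as $m\to\infty$ not by a Lipschitz bound but by invoking the already-established $H^1$ continuous dependence from \cite{BRZ16} together with a dominated-convergence type argument. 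Your sketch needs exactly this refinement to cover the full range $2\le\alpha<1+\tfrac{4}{(d-4)^+}$. (The paper also builds $\tau_n$ by iterating the local step, $\tau_{n+1}=\tau_n+\sigma_n$, rather than by a level-$n$ truncation; your device is not wrong, but it would require its own global existence argument for the truncated problem.)
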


It is natural to consider the existence of global solutions. We introduce the known result in $H^1$ which is [8, Theorem 1.2].
\begin{theorem}[$H^1$ global well-posedness]
\quad \\
Assume (H1$)_1$. Let $\alpha$ satisfy $1< \alpha <1+\frac{4}{(d-2)^+} \ \text{if} \ \lambda=-1,$ or 
$1< \alpha <1+\frac{4}{d} \ \text{if} \ \lambda=1.$ Then, for each $x\in H^1$ and $0<T<\infty$, there exists a unique $H^1$-global solution $(X,T)$ of (\ref{SNLS}), such that 
\begin{align*}
X\in L^{\gamma}(0,T;W^{1,\rho}), \ \Pas
\end{align*}
where $(\rho,\gamma)$ is any Strichartz pair. Moreover, for $\Pas \ \omega,$ the map $x\to X(\cdot,x,\omega)$ is continuous from $H^1$ to $L^{\infty}(0,T;H^1)\cap L^{\gamma}(0,T;W^{1,\rho})$ where $(\rho,\gamma)$ is any Strichartz pair.
\end{theorem}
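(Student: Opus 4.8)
The plan is to bootstrap the local theory of Theorem \ref{Hslocal} (the case $s=1$) into a global one by proving an a priori bound on the $H^1$-norm of the solution over an arbitrary fixed interval $[0,T]$. By Theorem \ref{Hslocal} we have local solutions $(X_n,\tau_n)$ of \eqref{SNLS}, the limiting time $\tau^*(x)=\lim_n\tau_n$, and the blowup alternative: for $\Pas\ \omega$, if $\tau_n(\omega)<\tau^*(x)(\omega)$ for all $n$, then $\|X(t)(\omega)\|_{H^1}\to\infty$ as $t\uparrow\tau^*(x)(\omega)$. Hence it suffices to prove $\sup_{t\in[0,T\wedge\tau^*(x))}\|X(t)\|_{H^1}<\infty$ $\Pas$; this forces $\tau^*(x)\ge T$, so there is a solution on $[0,T]$, and as $T$ is arbitrary the solution is global. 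Uniqueness follows from the local uniqueness of Theorem \ref{Hslocal} by patching over finitely many subintervals; the continuous dependence of $x\mapsto X$ in $L^\infty(0,T;H^1)\cap L^\gamma(0,T;W^{1,\rho})$ follows from the local continuous dependence together with the a priori bound (continuity on each subinterval of a length controlled by the bound, then concatenation); and $X\in L^\gamma(0,T;W^{1,\rho})$ follows from the local Strichartz estimates once global existence in $C([0,T];H^1)$ is known.

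The a priori bound rests on the interplay of the two classical invariants: the mass $\mathcal M(u)=\|u\|_{L^2}^2$ and the energy $\mathcal E(u)=\tfrac12\|\nabla u\|_{L^2}^2-\tfrac{\lambda}{\alpha+1}\|u\|_{L^{\alpha+1}}^{\alpha+1}$. Since the noise in \eqref{SNLS} is purely imaginary, Itô's formula applied to $\mathcal M(X(t))$ shows that the Itô correction produced by the term $X\,dW$ cancels exactly the contribution of $-i\mu X\,dt$, while the Laplacian and the nonlinearity contribute nothing to $\tfrac{d}{dt}\|X\|_{L^2}^2$; hence $\mathcal M(X(t))=\mathcal M(x)$ for all $t$, $\Pas$. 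Applying Itô's formula to $\mathcal E(X(t))$ gives an identity $\mathcal E(X(t))=\mathcal E(x)+\int_0^t b(X(r))\,dr+\sum_{j=1}^N\int_0^t\sigma_j(X(r))\,d\beta_j(r)$, where, after integrating by parts so as to move every spatial derivative off $X$ and onto the coefficients $\phi_j$, the hypothesis (H1$)_1$ (boundedness and decay of the $\phi_j$ and their derivatives up to order $3$, with the logarithmic weight in $d=2$) yields $|b(X(r))|+\sum_j|\sigma_j(X(r))|^2\le C\big(1+\|\nabla X(r)\|_{L^2}^2+\|X(r)\|_{L^2}^2+\|X(r)\|_{L^{\alpha+1}}^{\alpha+1}\big)$, with $C$ depending only on the $\phi_j$.

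To close this into a bound on $\|X\|_{H^1}$ one uses a coercivity argument that distinguishes the two cases. When $\lambda=-1$ the energy is coercive, $\mathcal E(u)=\tfrac12\|\nabla u\|_{L^2}^2+\tfrac1{\alpha+1}\|u\|_{L^{\alpha+1}}^{\alpha+1}\ge0$, so each of $\|\nabla X\|_{L^2}^2$ and $\|X\|_{L^{\alpha+1}}^{\alpha+1}$ is dominated by a multiple of $\mathcal E(X)$, and the right-hand side of the energy identity is $\le C(1+\mathcal E(X(r)))$. When $\lambda=1$ the potential energy has the wrong sign, so one writes $\tfrac12\|\nabla X\|_{L^2}^2=\mathcal E(X)+\tfrac1{\alpha+1}\|X\|_{L^{\alpha+1}}^{\alpha+1}$ and estimates the last term by Gagliardo--Nirenberg, $\|X\|_{L^{\alpha+1}}^{\alpha+1}\le C\|\nabla X\|_{L^2}^{d(\alpha-1)/2}\|X\|_{L^2}^{(\alpha+1)-d(\alpha-1)/2}$; since $d(\alpha-1)/2<2\iff\alpha<1+\tfrac4d$, mass conservation and Young's inequality absorb the gradient term, giving $\|\nabla X\|_{L^2}^2+\|X\|_{L^{\alpha+1}}^{\alpha+1}\le C(1+\mathcal E(X)+\mathcal M(x))$ and hence again a bound of the form $|b|+\sum_j|\sigma_j|^2\le C(1+\mathcal E(X(r))+\mathcal M(x))$. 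In both cases, adding a constant (depending on the conserved mass) to make the relevant functional positive, a stopping-time localisation to handle the local martingale, and Gronwall's inequality yield $\E{\sup_{t\le T\wedge\tau_n}\mathcal E(X(t))}$ bounded uniformly in $n$, whence $\sup_{t\in[0,T\wedge\tau^*(x))}\|X(t)\|_{H^1}<\infty$ $\Pas$ --- and this forces exactly the stated ranges, $1<\alpha<1+\tfrac4{(d-2)^+}$ for $\lambda=-1$ and $1<\alpha<1+\tfrac4d$ for $\lambda=1$.

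The main obstacle is the rigorous justification of the Itô formula for $\mathcal E(X(t))$ and of the integrations by parts underlying the bound on $b$ and $\sigma_j$: an $H^1$-solution satisfies \eqref{SNLS} only as an identity in $H^{-1}$, so $\|\nabla X\|_{L^2}^2$ is the top-order quantity and the formal computation is not licit as it stands. This is dealt with by a regularisation scheme --- smoothing the initial datum (and, if necessary, the noise), using a Galerkin truncation, or invoking the higher-regularity $H^2$-theory of Theorem \ref{mainH2} --- at which level all the manipulations are legitimate; one derives there the energy identity and the $\omega$-uniform a priori bounds, and then passes to the limit with the help of the stability estimates underlying Theorem \ref{Hslocal}. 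A secondary point needing care is the bookkeeping of the drift $b$: the integrations by parts must be arranged so that no second derivative of $X$ survives and every remaining term carries enough derivatives on the $\phi_j$ to be controlled, via (H1$)_1$, by a multiple of $1+\|\nabla X\|_{L^2}^2+\|X\|_{L^2}^2+\|X\|_{L^{\alpha+1}}^{\alpha+1}$.
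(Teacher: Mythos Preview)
The paper does not prove this theorem. It is introduced with the sentence ``We introduce the known result in $H^1$ which is [8, Theorem 1.2]'' and is quoted without argument as a result of Barbu--R\"ockner--Zhang; there is therefore no proof in the paper against which to compare your proposal.

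For what it is worth, your outline is a faithful summary of the standard strategy, and essentially the one used in the cited reference: couple the blowup alternative from the local $H^1$ theory with an a priori $H^1$-bound obtained from exact mass conservation (the noise here is purely imaginary) together with an It\^o-formula estimate on the Hamiltonian, the latter closed via Gagliardo--Nirenberg in the focusing case $\lambda=1$. The cited reference in fact carries this out on the rescaled variable $y=e^{-W}X$ rather than directly on $X$ --- one then works with a random PDE and ordinary calculus in $t$ instead of a genuine stochastic differential and the It\^o formula --- but the substance of the argument is the same. Your caveats about justifying the energy identity by regularization at the $H^1$ level are also to the point.
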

The following theorem which corresponds to $H^2$-solutions is new.
\begin{theorem}[The existence of global solutions in $H^2$]
\label{H2glob}
\quad \\
Assume $d\le7$ and (H1$)_2$. Let $\alpha$ satisfy $2\le \alpha <1+\frac{4}{(d-4)^+}.$
In addition, we assume that for $x\in H^2, \ 0<T<\infty,$ the following holds.
\[ \|X\|_{L^{\infty}(0,T;H^1)}+\|X\|_{L^q(0,T;W^{1,p})}<\infty, \ \Pas \]
where $(p,q)=(\alpha+1,\frac{4(\alpha+1)}{d(\alpha-1)})$. Then, there exists a unique $H^2$-global solution $(X,T)$ of (\ref{SNLS}).
\end{theorem}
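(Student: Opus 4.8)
The plan is a continuation argument built on Theorem~\ref{corH2}, which already furnishes a maximal $H^2$-valued local solution together with the blowup alternative. Hence it suffices to establish the a priori bound
$$\sup_{0\le t<\tau^*(x)}\|X(t)\|_{H^2}<\infty\qquad\Pas;$$
this rules out $\tau^*(x)<T$, so the maximal local solution is in fact the global solution $(X,T)$ of Definition~\ref{Xdef}, and uniqueness is inherited by patching the local uniqueness of Theorem~\ref{corH2}. Everything below is pathwise, for $\Pas$ $\omega$. Applying the rescaling transformation $y=e^{-W}X$ and using conservativity $\mathrm{Re}\,W=0$ (so $|e^{W}|\equiv1$), problem \eqref{SNLS} turns into the random Schr\"odinger equation
$$\partial_t y=i\Delta y+\mathcal{B}(t)y+\lambda i\,|y|^{\alpha-1}y,\qquad y(0)=x,$$
where the nonlinearity is \emph{exactly} $|y|^{\alpha-1}y$ because $|e^{W}|\equiv1$, and $\mathcal{B}(t)$ is a first-order differential operator whose coefficients are, pathwise on $[0,T]$, smooth and bounded in $\xi$ with the spatial decay supplied by (H1$)_2$ (they are built from $\nabla W,\Delta W$ and $\mu$). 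Since multiplication by $e^{\pm W(t,\cdot)}$ is, pathwise and uniformly on $[0,T]$, a bounded operator on $H^{k}$ and $W^{k,r}$ for the relevant $k,r$, the norms of $X(t)$ and $y(t)$ in these spaces are comparable, and it is enough to bound $y$ in $H^2$ on $[0,T]$.

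Let $U(t,s)$ be the evolution operator associated with the generator $i\Delta+\mathcal{B}(t)$ — the modified evolution operator of the rescaling approach. Exactly as in the proof of Theorem~\ref{corH2}, under (H1$)_2$ it obeys the homogeneous and inhomogeneous Strichartz estimates at the $W^{2,\cdot}$ level, with constants finite pathwise on $[0,T]$; this is the step where it genuinely matters that $U(t,s)$ is neither a convolution semigroup nor commutes with $\nabla$ and $\Delta$, so that the $W^{2,p}$ bound cannot be reduced to an $L^2$ bound on $\Delta(\cdot)$ and must be obtained directly for $U$, the perturbation $\mathcal{B}(t)$ being absorbed into the generator. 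From Duhamel's formula $y(t)=U(t,a)y(a)+\lambda i\int_a^t U(t,s)|y(s)|^{\alpha-1}y(s)\,ds$ on a subinterval $I=[a,b]\subset[0,T]$, these estimates give, for the pair $(p,q)$ of Theorem~\ref{corH2},
$$\|y\|_{Y(I)}\ \lesssim\ \|y(a)\|_{H^2}+\bigl\|\,|y|^{\alpha-1}y\,\bigr\|_{L^{q'}(I;W^{2,p'})},\qquad Y(I):=C(I;H^2)\cap L^{q}(I;W^{2,p}).$$

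It remains to estimate the nonlinearity and to iterate. Since $\alpha\ge2$, the map $z\mapsto|z|^{\alpha-1}z$ is $C^1$ and, schematically, $|\nabla(|y|^{\alpha-1}y)|\lesssim|y|^{\alpha-1}|\nabla y|$ and $|\nabla^2(|y|^{\alpha-1}y)|\lesssim|y|^{\alpha-1}|\nabla^2 y|+|y|^{\alpha-2}|\nabla y|^2$; this smoothness is why $\alpha\ge2$ is imposed and why Kato's device is not needed here. By H\"older's inequality, with exponents chosen compatibly with the $H^{1}$- and $H^{2}$-admissible Strichartz pairs, one bounds
$$\bigl\|\,|y|^{\alpha-1}y\,\bigr\|_{L^{q'}(I;W^{2,p'})}\ \lesssim\ \|y\|_{L^{q_1}(I;L^{r_1})}^{\alpha-1}\,\|y\|_{L^{q}(I;W^{2,p})}+\|y\|_{L^{q_1}(I;L^{r_1})}^{\alpha-2}\,\|y\|_{L^{q_2}(I;W^{1,r_2})}^{2},$$
where, by the Sobolev embedding $W^{1,p}\hookrightarrow L^{r_1}$ and interpolation against $L^{\infty}_tH^1$, every factor except $\|y\|_{L^{q}(I;W^{2,p})}$ is bounded by the assumed finite quantity $A_\omega:=\|X\|_{L^{\infty}(0,T;H^1)}+\|X\|_{L^{q}(0,T;W^{1,p})}$; the hypotheses $d\le7$ and $2\le\alpha<1+\tfrac{4}{(d-4)^+}$ are precisely what make all these exponents admissible and the embeddings valid. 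Because $t\mapsto\|X\|_{L^{q}(0,t;W^{1,p})}$ is finite and continuous $\Pas$, one partitions $[0,T]$ into finitely many subintervals $[t_k,t_{k+1}]$ on each of which $\|X\|_{L^{q}(t_k,t_{k+1};W^{1,p})}$ is so small that the factor $\|y\|_{L^{q_1}(I;L^{r_1})}^{\alpha-1}$ multiplying $\|y\|_{L^{q}(I;W^{2,p})}$ is $\le\tfrac12$; absorbing that term into the left side gives $\|y\|_{Y([t_k,t_{k+1}])}\le C_k\bigl(1+\|y(t_k)\|_{H^2}\bigr)$ with $C_k<\infty$ depending only on $\omega,T,A_\omega$ and the partition. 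Iterating over $k=0,\dots,K-1$ (a discrete Gronwall) yields $\sup_{[0,T]}\|y(t)\|_{H^2}<\infty$, hence $\sup_{[0,T]}\|X(t)\|_{H^2}<\infty$ $\Pas$. By the blowup alternative this forces $\tau^*(x)=T$, and uniqueness of the global solution follows from that of Theorem~\ref{corH2}; this proves the theorem.

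The step I expect to be the main obstacle is the nonlinear estimate together with the exponent bookkeeping: one must check that, across the whole range $d\le7$, $2\le\alpha<1+\tfrac{4}{(d-4)^+}$, there exist admissible Strichartz pairs placing the factor carrying $|y|^{\alpha-1}$ — and the mixed factor arising from $|y|^{\alpha-2}|\nabla y|^2$ — into a space controlled by $\|X\|_{L^{\infty}_tH^1}+\|X\|_{L^{q}_tW^{1,p}}$, while simultaneously carrying the first-order operator $\mathcal{B}(t)$ through the argument; the latter, although only of first order in $y$, enters the $H^2$-level estimate with nontrivial (bounded, decaying) coefficients and must be absorbed into the generator of $U(t,s)$, which requires the $W^{2,\cdot}$-Strichartz bounds for $U$ — the technical core of the local theory — to be available at the $H^2$ level. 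A secondary technical point is the rigorous justification of Duhamel's formula for $y$ at the $H^2$ level, by a density/approximation argument, since a priori the equation holds only in a weaker sense.
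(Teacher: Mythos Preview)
Your proposal is correct and follows essentially the same persistence-of-regularity strategy as the paper: pass to the rescaled variable $y$, apply the $H^2$-level Strichartz estimate for $U(t,s)$, and use the assumed $H^1/W^{1,p}$ control to make the nonlinear term linear in $\|y\|_{L^q(I;W^{2,p})}$ with a small coefficient on short intervals. The only cosmetic difference is packaging: the paper works directly with the Strichartz pair $(p,q)=(\alpha+1,\tfrac{4(\alpha+1)}{d(\alpha-1)})$ of the hypothesis, assumes $\tau^*(x)<T$, and shows $F$ is a contraction on $I=[\tau^*-\varepsilon,\tau^*+\varepsilon]$ using the factor $(2\varepsilon)^\theta$ for smallness (contradicting maximality), rather than your a~priori bound plus blowup alternative with a partition of $[0,T]$.
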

So by restricting the range of power to that of $H^1$ well-posedness, we may apply Theorem \ref{H2glob} to have the following $H^2$ globally well-posed result, as a corollary.
This method is sometimes called persistence of regularity.
\begin{corollary}
Assume $d\le 7$ and (H1$)_2$. Let $\alpha$ satisfy $2\le \alpha <1+\frac{4}{(d-2)^+} \ \text{if} \ \lambda=-1,$ or 
$2\le \alpha <1+\frac{4}{d} \ \text{if} \ \lambda=1.$ Then, there exists a unique $H^2$-global solution $(X,T)$ of (\ref{SNLS}).
\end{corollary}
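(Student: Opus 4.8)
The plan is to deduce this corollary with no new analysis of the rescaled equation: one simply feeds the conclusion of the $H^1$ global well-posedness theorem into Theorem~\ref{H2glob}. This is the ``persistence of regularity'' mechanism already advertised in the text, and the whole argument is soft, reducing to checking two exponent inclusions and the admissibility of one Strichartz pair.

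First I would check the inclusions of exponent ranges. Let $I_\lambda$ denote the interval of $\alpha$ allowed in the corollary, i.e. $[2,1+\tfrac{4}{(d-2)^+})$ if $\lambda=-1$ and $[2,1+\tfrac{4}{d})$ if $\lambda=1$. One verifies elementarily that $I_\lambda$ is contained in the range $1<\alpha<1+\tfrac{4}{(d-2)^+}$ (resp. $1<\alpha<1+\tfrac{4}{d}$) of the $H^1$ global theorem — trivially, since the left endpoint has only been raised from $1$ to $2$ — and that $I_\lambda\subset\{\,2\le\alpha<1+\tfrac{4}{(d-4)^+}\,\}$, the structural range of Theorem~\ref{H2glob}: for $d\le4$ both right endpoints equal $+\infty$, while for $5\le d\le 7$ this amounts to $\tfrac{4}{d-2}\le\tfrac{4}{d-4}$. (Incidentally $I_\lambda=\emptyset$ unless $d\le5$ when $\lambda=-1$ and $d\le3$ when $\lambda=1$, but this plays no role.) Since $d\le7$ and (H1$)_2$ are assumed — and (H1$)_2$ implies (H1$)_1$ — all hypotheses of both theorems other than the a priori bound of Theorem~\ref{H2glob} are in place.

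Next, because $x\in H^2\subset H^1$ and $\alpha\in I_\lambda$ lies in the admissible $H^1$ range, the $H^1$ global well-posedness theorem produces a unique $H^1$-global solution $(X,T)$ of \eqref{SNLS} with $X\in C([0,T];H^1)$ and $X\in L^{\gamma}(0,T;W^{1,\rho})$, $\Pas$, for every Strichartz pair $(\rho,\gamma)$; in particular $\|X\|_{L^{\infty}(0,T;H^1)}<\infty$, $\Pas$. I would then observe that the pair $(p,q)=(\alpha+1,\tfrac{4(\alpha+1)}{d(\alpha-1)})$ appearing in the hypothesis of Theorem~\ref{H2glob} is a Strichartz pair: indeed $\tfrac{2}{q}=\tfrac{d(\alpha-1)}{2(\alpha+1)}=\tfrac{d}{2}-\tfrac{d}{\alpha+1}=\tfrac{d}{2}-\tfrac{d}{p}$, with $p\in[2,\infty)$ and $q\in(2,\infty)$ for $\alpha>1$, so the admissibility conditions (including the $d=2$ endpoint restrictions $p<\infty$, $q>2$) hold. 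Taking $(\rho,\gamma)=(p,q)$ therefore gives $\|X\|_{L^q(0,T;W^{1,p})}<\infty$, $\Pas$, whence
\[
\|X\|_{L^{\infty}(0,T;H^1)}+\|X\|_{L^q(0,T;W^{1,p})}<\infty,\qquad\Pas,
\]
which is exactly the a priori estimate postulated in Theorem~\ref{H2glob}.

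Finally I would invoke Theorem~\ref{H2glob}: with $d\le 7$, (H1$)_2$, $2\le\alpha<1+\tfrac{4}{(d-4)^+}$, and the displayed bound all verified for the given $x\in H^2$ and $0<T<\infty$, it yields a unique $H^2$-global solution $(X,T)$ of \eqref{SNLS}. Uniqueness is precisely the uniqueness asserted in Theorem~\ref{H2glob}, and the construction is consistent with the $H^1$ one, since an $H^2$-global solution is in particular an $H^1$ solution and hence coincides $\Pas$ on $[0,T]$ with the $H^1$-global solution above by $H^1$ uniqueness. I expect the only real ``obstacle'' to be this bookkeeping — confirming that $(p,q)$ is Strichartz-admissible and that $I_\lambda$ genuinely sits inside both the $H^1$ global range and the range $2\le\alpha<1+\tfrac{4}{(d-4)^+}$ of Theorem~\ref{H2glob} — rather than any analytic step.
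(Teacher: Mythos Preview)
Your proposal is correct and follows exactly the route the paper intends: the corollary is stated immediately after Theorem~\ref{H2glob} with the one-line explanation that restricting $\alpha$ to the $H^1$-global range lets one feed the $H^1$ global bounds into Theorem~\ref{H2glob} (``persistence of regularity''), and you have simply written out the bookkeeping---the exponent inclusions, the implication (H1$)_2\Rightarrow$(H1$)_1$, and the Strichartz admissibility of $(p,q)=(\alpha+1,\tfrac{4(\alpha+1)}{d(\alpha-1)})$---that the paper leaves implicit.
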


\section{Rescaling approach}
\setcounter{equation}{0}

The main tool to prove Theorem \ref{mainH2} and Theorem \ref{corH2}
  is based on the rescaling approach as used for Theorem \ref{Hslocal} in \cite{BRZ14,BRZ16}. 
We apply the rescaling transformation
\begin{align}
\label{RT}
X(t,\xi)=e^{W(t,\xi)}y(t,\xi).
\end{align}
By an
application of It\^o's product formula, we see that $\Pas$
\[ dX=e^Wdy+e^WydW-\mu e^Wydt. \]
We apply (\ref{RT}) to (\ref{SNLS}) to have
\begin{eqnarray}
\label{RSNLS}
\begin{cases}
\displaystyle \frac{\partial y(t,\xi)}{\partial t}=A(t)y(t,\xi)-\lambda i|y(t,\xi)|^{\alpha-1}y(t,\xi), \\
y(0,\xi)=x(\xi),
\end{cases}
\end{eqnarray}
where
\begin{align}
\label{EO}
A(t)y(t,\xi)&=-ie^{-W}\Delta (e^Wy) \nonumber \\
&=-i(\Delta +b(t,\xi)\cdot \nabla +c(t,\xi))y(t,\xi), \\
\label{b}
b(t,\xi)&=2\nabla W(t,\xi), \\
\label{c}
c(t,\xi)&=\sum_{j=1}^d(\partial_jW(t,\xi))^2+\Delta W(t,\xi).
\end{align}
The definition of solutions to (\ref{RSNLS}) are given in the following sense (similar to Definition \ref{Xdef}).
\begin{Definition}
\label{ydef}
Let $x\in H^s \ (s=0,1,2)$ and let $\alpha>1$. Fix $0<T<\infty$. A $H^s$-solution of \textnormal{(\ref{RSNLS})} is a pair $(y,\tau)$, where $\tau(\le T)$ is an $(\mathcal{F}_t)$-stopping time, and $y=(y(t))_{t\in [0,T]}$ is an $H^s$-valued continuous $(\mathcal{F}_t)$-adapted process, such that $|y|^{\alpha-1}y\in L^1(0,\tau;H^{s-2})$, $\Pas$, and it satisfies $\Pas$
\begin{eqnarray}
\label{RWF}
y(t)=x+\int_0^{t\wedge\tau}A(r)y(r)dr-\int_0^{t\wedge\tau}\lambda i|y(r)|^{\alpha-1}y(r)dr, \quad t\in [0,T],
\end{eqnarray}
as an equation in $H^{s-2}$.
\end{Definition}
We say that uniqueness holds for (\ref{RSNLS}) in the function space $S$, if for any two solutions of (\ref{RSNLS}) $(y_i,\tau_i), \ y_i\in S, \ i=1,2$, it holds $\Pas$ that $y_1=y_2 \ \text{on} \ [0,\tau_1\wedge \tau_2]$.

The following theorem establishes the equivalence between 
the two definitions of solutions to (\ref{SNLS}) and (\ref{RSNLS}) respectively.
\begin{theorem}
\label{Xy}
For $s=0,1,2,$ the following holds.
\begin{enumerate}
\item Let $(y,\tau)$ be a $H^s$-solution of (\ref{RSNLS}) in the sense of Definition \ref{ydef}. Set $X:=e^Wy$. Then $(X,\tau)$ is a $H^s$-solution of (\ref{SNLS}) in the sense of Definition \ref{Xdef}.
\item Let $(X,\tau)$ be a $H^s$-solution of (\ref{SNLS}) in the sense of Definition \ref{Xdef}. 
Set $y:=e^{-W}X$. Then $(y,\tau)$ is a $H^s$-solution of (\ref{RSNLS}) in the sense of Definition \ref{ydef}.
\end{enumerate}
\end{theorem}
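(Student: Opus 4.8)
The plan is to prove both implications by It\^o's product formula applied to the transformation $X=e^Wy$ (resp.\ its inverse $y=e^{-W}X$), together with elementary observations on the multiplication operators $e^{\pm W}$. First, note that $W(t,\cdot)=\sum_{j=1}^N i\phi_j\beta_j(t)$ has purely imaginary spatial profile, so $|e^{\pm W(t,\xi)}|\equiv 1$; moreover, since $\phi_j\in C^{\infty}_b(\mathbb{R}^d)$, for $\Pas\ \omega$ and each $t$ the functions $e^{\pm W(t,\cdot)}$ and their spatial derivatives up to order $s+2$ are bounded, with a random constant that is locally bounded in $t$ (it depends only on $\sup_{r\le t}|\beta_j(r)|$ and the $C^{s+2}_b$-norms of the $\phi_j$). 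Hence multiplication by $e^{\pm W(t)}$ is, $\Pas$, a bounded operator on $H^k$ for every $|k|\le s+2$ (the negative-order cases by duality). This at once yields the equivalence of the structural conditions in Definitions \ref{Xdef} and \ref{ydef}: $X=e^Wy$ is $H^s$-valued, continuous and $(\mathcal{F}_t)$-adapted iff $y$ is (adaptedness of $e^{\pm W}$ being clear from \eqref{def-W}); and, because $|X|=|y|$, one has $|X|^{\alpha-1}X=e^W|y|^{\alpha-1}y$, so $|X|^{\alpha-1}X\in L^1(0,\tau;H^{s-2})$ iff $|y|^{\alpha-1}y\in L^1(0,\tau;H^{s-2})$. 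Thus only the two integral identities \eqref{WF} and \eqref{RWF} remain to be matched, and this is where It\^o enters.

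Second, compute the It\^o differentials of $e^{\pm W}$. From \eqref{def-W}, $dW=\sum_{j=1}^N i\phi_j\,d\beta_j$, and the (non-conjugated) quadratic variation is $d\langle W\rangle=\sum_{j=1}^N(i\phi_j)^2\,dt=-\sum_j\phi_j^2\,dt=-2\mu\,dt$ by \eqref{def-mu}. Hence, applying the scalar It\^o formula pointwise in $\xi$,
\[ d(e^{\pm W})=\pm e^{\pm W}\,dW+\tfrac12 e^{\pm W}\,d\langle W\rangle=\pm e^{\pm W}\,dW-\mu e^{\pm W}\,dt . \]
For direction (1), $y$ is by \eqref{RWF} a continuous $H^{s-2}$-valued process of finite variation in $t$, with $dy=(A(r)y-\lambda i|y|^{\alpha-1}y)\,dr$, so it has no martingale part and the cross-variation of $e^W$ with $y$ vanishes; the product rule then gives
\[ dX=e^W\,dy+y\,d(e^W)=e^W A(r)y\,dr-\lambda i\,e^W|y|^{\alpha-1}y\,dr-\mu e^W y\,dr+e^W y\,dW . \]
Using the operator identity $e^W A(r)=-i\Delta e^W$ from \eqref{EO} (i.e.\ $e^W\bigl(-ie^{-W}\Delta(e^W y)\bigr)=-i\Delta(e^W y)$), together with $e^W|y|^{\alpha-1}y=|X|^{\alpha-1}X$ and $e^W y=X$, one recognises exactly the differential form of \eqref{WF}; integrating over $[0,t\wedge\tau]$ and using $y(0)=x=X(0)$ gives \eqref{WF}. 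Direction (2) is the mirror image: with $y=e^{-W}X$ and $X$ solving \eqref{WF}, the martingale part of $X$ is $\int X\,dW$ and that of $e^{-W}$ is $-\int e^{-W}\,dW$, so $d\langle e^{-W},X\rangle=-e^{-W}X\,d\langle W\rangle=2\mu e^{-W}X\,dt$; the product rule $dy=e^{-W}\,dX+X\,d(e^{-W})+d\langle e^{-W},X\rangle$ then makes the $dW$-terms cancel and the three $\mu$-terms cancel ($-\mu-\mu+2\mu=0$), leaving $dy=-ie^{-W}\Delta X\,dr-\lambda i e^{-W}|X|^{\alpha-1}X\,dr=A(r)y\,dr-\lambda i|y|^{\alpha-1}y\,dr$, which integrates to \eqref{RWF}.

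The main point requiring care is that \eqref{WF} and \eqref{RWF} are identities in the negative-order space $H^{s-2}$ when $s<2$, so the product-rule manipulations above must be legitimised in an infinite-dimensional, low-regularity setting. I would do this by pairing with test functions $\varphi\in\mathcal{S}(\mathbb{R}^d)$: since $e^{\pm W(t,\cdot)}$ is a $C^{\infty}_b(\mathbb{R}^d)$-valued It\^o process built from the finitely many scalar Brownian motions $\beta_j$, the product $e^{\pm W(t)}\varphi$ is an $\mathcal{S}$-valued (in particular $H^{2-s}$-valued) It\^o process whose differential is the one from the second step, and the scalar pairing $\langle y(t),e^{\pm W(t)}\varphi\rangle$ (resp.\ $\langle X(t),e^{\mp W(t)}\varphi\rangle$) is a genuine one-dimensional semimartingale to which the classical It\^o product formula applies term by term. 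The drift terms are controlled pathwise by the equivalence of the first step (in particular $|X|^{\alpha-1}X\in L^1(0,\tau;H^{s-2})$), and the stochastic integral $\int_0^{t\wedge\tau}X\,dW=\sum_{j=1}^N\int_0^{t\wedge\tau}i\phi_j X\,d\beta_j$ is a well-defined $H^s$-valued (a fortiori $H^{s-2}$-valued) local martingale because $\phi_j X\in C([0,T];H^s)$. Collecting the $\varphi$-tested identities and using density of $\mathcal{S}$ in $H^{2-s}$ upgrades them to the equalities \eqref{WF}, \eqref{RWF} in $H^{s-2}$. An alternative is to prove the statement first for $x\in\mathcal{S}$ (and then, if desired, for regular $\phi_j$), where all objects are classical, and pass to the limit using the multiplier bounds of the first step; I expect the test-function route to be shorter. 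That the stopping time $\tau$ and the initial datum are preserved under the transformation is immediate, since $W(0,\cdot)\equiv 0$.
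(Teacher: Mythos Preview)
Your argument is correct and self-contained: you carry out the It\^o product-rule computation for $X=e^Wy$ (and its inverse) in full, and you justify the formal manipulations in $H^{s-2}$ by pairing against test functions, using the $C^\infty_b$-regularity of $e^{\pm W}$ to handle the multiplier bounds. The computations of $d(e^{\pm W})$, the cancellation of the $\mu$-terms and the $dW$-terms in direction~(2), and the identification $e^WA(r)y=-i\Delta X$ are all correct.

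The paper, however, takes a much shorter route: it observes that the cases $s=0,1$ are already established in \cite{BRZ14,BRZ16} (specifically Lemma~2.4 of \cite{BRZ16}), and then for $s=2$ it simply \emph{bootstraps} from the $s=1$ result. Concretely, since $H^2\subset H^1$, a $H^2$-solution $y$ of \eqref{RSNLS} is in particular a $H^1$-solution, so by the cited lemma $X=e^Wy$ satisfies \eqref{WF} as an identity in $H^{-1}$; but because $y\in C([0,T];H^2)$ and $e^W\in C([0,T];W^{2,\infty})$, each term on the right-hand side of \eqref{WF} actually lies in $L^2$, and the identity therefore holds in $L^2=H^{s-2}$. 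No fresh It\^o calculus is needed. Your approach has the advantage of being self-contained and of making the mechanism of the rescaling transparent (in particular the role of the correction term $\mu$); the paper's approach trades this for brevity by importing the hard work from the lower-regularity references and reducing the $s=2$ case to a one-line regularity upgrade.
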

\begin{proof}
The cases of $L^2$ and $H^1$ were proved in \cite{BRZ14,BRZ16}. Therefore, we prove the case of $H^2$.
In the first case (1), since $x\in H^2\subset H^1$ and $y$ satisfies (\ref{RWF}) in $L^2\subset H^{-1}$, Lemma 2.4 in \cite{BRZ16} implies that $(X,\tau)$ is a solution of (\ref{SNLS}) in the sense of Definition \ref{Xdef}, in particular, $X$ solves (\ref{WF}) in $H^{-1}$. But, as $y\in C([0,T];H^2)$ and $e^W\in C([0,T];W^{2,\infty})$, we deduce that $X\in C([0,T];H^2)$. Hence, the right hand side of (\ref{WF}) is in $L^2$, which implies that $(X,\tau)$ is a solution of (\ref{SNLS}) in the sense of Definition \ref{Xdef}, thereby completing the proof of (1). The proof for (2) follows analogously.
\end{proof}
By the equivalence of two expressions of solutions via the rescaling transformation (\ref{RT}),
Theorems \ref{mainH2}, \ref{corH2} and \ref{H2glob} are reformulated below in Theorems \ref{ymain}, \ref{ycor} and \ref{yglob}, respectively.
\begin{theorem}
\label{ymain}
Assume (H1$)_2$. Let $\alpha$ satisfy (\ref{H2power}). Also, assume that $\phi_j$ is a first-order polynomial for any $j=1,2,\cdots,N$. Then, for each $x\in H^2$ and $0<T<\infty$, there is a sequence of local solutions $(y_n,\tau_n)$ of \textnormal{(\ref{RSNLS})}, $n\in \mathbb{N}$, where $\tau_n$ is a sequence of increasing stopping times. For every $n\ge 1$, it holds $\Pas$ that
\begin{align*}
y_n|_{[0,\tau_n]}\in C([0,\tau_n];H^2),
\end{align*}
and uniqueness holds in the function space $C([0,\tau_n];H^2)$. Moreover, defining $\displaystyle \tau^*(x)=\lim_{n\to \infty}\tau_n$ and $\displaystyle y=\lim_{n\to \infty}y_n\mathbf{1}_{[0,\tau^*(x))},$ for $n\ge1, \ \Pas \ \omega \in \Omega$ and $0\le s<2$, the map $x\to y(\cdot,x,\omega)$ is continuous from $H^2$ to $L^{\infty}(0,\tau_n;H^s)$.
\end{theorem}
\begin{theorem}
\label{ycor}
Assume $d\le 7$ and (H1$)_2$. Let $\alpha$ satisfy $2\le \alpha <1+\frac{4}{(d-4)^+}.$ Then, for each $x\in H^2$ and $0<T<\infty$, there is a sequence of local solutions $(y_n,\tau_n)$ of \textnormal{(\ref{RSNLS})}, $n\in \mathbb{N}$, where $\tau_n$ is a sequence of increasing stopping times. For every $n\ge 1$, it holds $\Pas$ that
\begin{align}
y_n|_{[0,\tau_n]}\in C([0,\tau_n];H^2)\cap L^{\gamma}(0,\tau_n;W^{2,\rho}),
\end{align}
and uniqueness holds in the function space $C([0,\tau_n];H^2)$ where $(\rho,\gamma)$ is any Strichartz pair. Moreover, defining $\displaystyle \tau^*(x)=\lim_{n\to \infty}\tau_n$ and $\displaystyle y=\lim_{n\to \infty}y_n\mathbf{1}_{[0,\tau^*(x))},$ for $n\ge1$ and $\Pas \ \omega \in \Omega$, the map $x\to y(\cdot,x,\omega)$ is continuous from $H^2$ to $L^{\infty}(0,\tau_n;H^2)\cap L^{\gamma}(0,\tau_n;W^{2,\rho})$. Furthermore, we have the blowup alternative, that is, for $\Pas \ \omega$, if $\tau_n(\omega)<\tau^*(x)(\omega),$ \ for every $n\in \mathbb{N}$, then
\[ \lim_{t\to \tau^*(x)(\omega)}\|y(t)(\omega)\|_{H^2}=\infty. \]  
\end{theorem}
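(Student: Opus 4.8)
\textbf{Proof strategy for Theorem \ref{ycor}.}
The plan is to solve the random equation \eqref{RSNLS} by a contraction argument in a Strichartz space built over $H^2$. The decisive simplification available here is that for $\alpha\ge 2$ the nonlinearity $F(z):=|z|^{\alpha-1}z$ satisfies the pointwise bounds $|F'(z)|\lesssim|z|^{\alpha-1}$ and, for $H^2$-functions, $|\nabla F(y)|\lesssim|y|^{\alpha-1}|\nabla y|$ and $|\nabla^2F(y)|\lesssim|y|^{\alpha-1}|\nabla^2y|+|y|^{\alpha-2}|\nabla y|^2$; this is exactly enough to run the Sobolev chain rule and is what removes the need for Kato's fractional-power technique. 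Throughout I fix a realization $\omega\in\Omega$ for which the paths $t\mapsto b(t,\cdot)$ and $t\mapsto c(t,\cdot)$ from \eqref{b}--\eqref{c} are continuous with values in $C^2_b$-type spaces; this holds $\Pas$ under (H1$)_2$, and it is precisely here that the requirement $|\gamma|\le 4$ on the $\phi_j$ is consumed, since $c$ already carries two derivatives of $W$ and the $H^2$-analysis differentiates the equation twice more.

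First I would construct the propagator $U(t,s)$, $0\le s\le t\le T$, of the linear part $\partial_t u=A(t)u$ of \eqref{RSNLS}, with $A(t)=-i(\Delta+b(t,\xi)\cdot\nabla+c(t,\xi))$ as in \eqref{EO}. Since $A(t)$ is a lower-order perturbation of $-i\Delta$ with smooth, bounded, decaying coefficients, $U(t,s)$ exists as a strongly continuous two-parameter family bounded on $H^k$, $k=0,1,2$. The core step is to prove the \emph{modified Strichartz estimates} adapted to $W^{2,\rho}$: for every Strichartz pair $(\rho,\gamma)$ and every admissible $(\tilde\rho,\tilde\gamma)$,
\[
\|U(\cdot,s)f\|_{L^\gamma(s,T;W^{2,\rho})}\le C(T,\omega)\|f\|_{H^2}, \qquad
\Bigl\|\int_s^{\cdot}U(\cdot,r)g(r)\,dr\Bigr\|_{L^\gamma(s,T;W^{2,\rho})}\le C(T,\omega)\|g\|_{L^{\tilde\gamma'}(s,T;W^{2,\tilde\rho'})},
\]
with $C(T,\omega)$ depending only on $T$ and on $\sup_{t\le T}\bigl(\|b(t)\|_{W^{2,\infty}}+\|c(t)\|_{W^{2,\infty}}\bigr)$, hence finite $\Pas$. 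I expect this to be the main obstacle: because $\nabla$ and $\Delta$ do not commute with $A(t)$ and $U(t,s)$ is not a semigroup, differentiating the linear equation twice produces the commutators $[\nabla,A(t)]$ and $[\Delta,A(t)]$, which are first order with coefficients built from up to two derivatives of $b,c$ — i.e.\ up to four derivatives of the $\phi_j$ — and these must be absorbed perturbatively, via a Duhamel iteration and Gronwall's inequality played against the $L^2$- and $H^1$-level Strichartz estimates underlying Theorem \ref{Hslocal}.

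Granted the linear estimates, I would write \eqref{RWF} in mild form $y(t)=U(t,0)x-\lambda i\int_0^tU(t,r)F(y(r))\,dr$ and set up a fixed point on a random interval $[0,\tau]$ inside a ball of $C([0,\tau];H^2)\cap L^\gamma(0,\tau;W^{2,\rho})$ for the specific pair $(\rho,\gamma)$ in the statement ($d\le3$: any admissible pair; $d=4,5,6,7$: the indicated pair, chosen so that Hölder together with the Sobolev embedding $W^{2,\rho}\hookrightarrow L^{q(\alpha)}$ puts $F(y)$ in the dual Strichartz space $L^{\gamma'}(0,\tau;W^{2,\rho'})$ with a leftover positive power of $\tau$). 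The nonlinear estimate uses the pointwise bounds above together with $\|F(y)\|_{W^{2,\rho'}}\lesssim\|y\|_{L^\infty}^{\alpha-1}\|y\|_{W^{2,\rho}}+\|y\|^{\alpha-2}_{L^{q}}\|\nabla y\|^2_{L^{q'}}$ with exponents matched by Sobolev embedding; the requirement that these embeddings be admissible is exactly what forces $d\le7$ and $2\le\alpha<1+\tfrac{4}{(d-4)^+}$. Choosing the radius and $\tau$ — a stopping time, e.g.\ the first time a suitable norm of $(b,c)$ or of the iterate exceeds a fixed threshold — small enough makes the map a contraction in the lower-regularity metric (measuring differences without the top derivative, to avoid needing Lipschitz continuity of $\nabla^2 F$), giving a unique local solution $(y_1,\tau_1)$.

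Finally I would: (i) iterate the local construction starting from $y(\tau_n)\in H^2$ as new data to produce the increasing sequence of stopping times and solutions $(y_n,\tau_n)$ with $y_n|_{[0,\tau_n]}\in C([0,\tau_n];H^2)\cap L^\gamma(0,\tau_n;W^{2,\rho})$, noting that any $C([0,\tau_n];H^2)$-solution automatically lies in the Strichartz space by the linear estimate applied to its own Duhamel formula, whence uniqueness in $C([0,\tau_n];H^2)$ follows from a Gronwall estimate on the difference of two solutions; (ii) prove the blow-up alternative by showing the local existence time is bounded below in terms of $\|y(t)\|_{H^2}$ and of $\sup_{t\le\tau^*}(\|b(t)\|_{W^{2,\infty}}+\|c(t)\|_{W^{2,\infty}})$ (the latter finite $\Pas$), so that a bounded $H^2$-norm up to $\tau^*$ would permit continuation beyond $\tau^*$, a contradiction; (iii) deduce the continuous dependence $x\mapsto y$ in $L^\infty(0,\tau_n;H^2)\cap L^\gamma(0,\tau_n;W^{2,\rho})$ from the same linear and nonlinear estimates plus Gronwall, using that $\alpha\ge2$ makes $F$ locally Lipschitz from $H^2$ into the dual Strichartz space. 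Transporting the result back through Theorem \ref{Xy} then yields Theorem \ref{corH2}.
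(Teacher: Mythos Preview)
Your overall strategy coincides with the paper's: the modified $W^{2,\rho}$-Strichartz estimates for $U(t,s)$ are established exactly via the commutator mechanism you describe (this is Lemma~\ref{H2SE}, where the lower-order terms coming from $[\Delta,A(t)]$ are absorbed through the local smoothing spaces $\widetilde X_{[0,T]}$ and the $L^2/H^1$-level estimates \eqref{XEL^2}--\eqref{XEH^1}); the fixed point is then run on a ball of $C([0,\tau_1];H^2)\cap L^q(0,\tau_1;W^{2,p})$ with contraction in the weaker metric $L^\infty_tL^2\cap L^q_tL^p$, using the same pointwise bounds on $\nabla^2F(y)$ and the same choice of Strichartz pair, and the iteration and blow-up alternative go as you outline.

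There is, however, a genuine gap in step (iii). You assert that ``$\alpha\ge 2$ makes $F$ locally Lipschitz from $H^2$ into the dual Strichartz space'' and deduce continuous dependence by Gronwall. For $2\le\alpha<3$ this is false: $\nabla^2 F(y)$ contains the term $F''(y)(\nabla y)^2$ with $F''(z)\sim|z|^{\alpha-2}$, which is only H\"older-$(\alpha-2)$ in $z$, so $y\mapsto\Delta F(y)$ is \emph{not} Lipschitz into $L^{p'}$ (this is precisely why you, correctly, dropped to a lower-regularity metric for the contraction). The paper's route is different and more delicate: it first imports the $H^1$-level convergence $y_m\to y$ in $L^\infty_tH^1\cap L^q_tW^{1,p}$ from \cite{BRZ16}, then writes the equation for $\Delta(y_m-y)$ via \eqref{deltau}, applies \eqref{XEL^2} with the commutator terms controlled in $\widetilde X'$ through \eqref{eq426}--\eqref{eq427}, and decomposes $\Delta g(y_m)-\Delta g(y)$ into eight pieces $I_1,\dots,I_8$. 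The pieces $I_1,I_3,I_5,I_7$ carry $\Delta y_m-\Delta y$ or $\nabla y_m-\nabla y$ and are absorbed by the smallness factor $t^\theta$; the pieces $I_2,I_4,I_6,I_8$, which carry differences like $|y_m|^{\alpha-2}-|y|^{\alpha-2}$, are not Lipschitz-bounded but are shown to tend to zero directly using the already-established lower-order convergence and uniform $H^2$ bounds. In short, the $H^2$ continuous dependence is a bootstrap from $H^1$ plus a continuity (not Lipschitz) argument; a plain Gronwall as you propose would only close for $\alpha\ge 3$.
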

\begin{theorem}
\label{yglob}
Assume $d\le 7$ and (H1$)_2$. Let $\alpha$ satisfy $2\le \alpha <1+\frac{4}{(d-4)^+}.$
In addition, we assume that for $x\in H^2, \ 0<T<\infty,$ the following holds.
\[ \|y\|_{L^{\infty}(0,T;H^1)}+\|y\|_{L^q(0,T;W^{1,p})}<\infty, \ \Pas \]
where $(p,q)=(\alpha+1,\frac{4(\alpha+1)}{d(\alpha-1)})$. Then, there exists a unique $H^2$-global solution $(y,T)$ of (\ref{RSNLS}).
\end{theorem}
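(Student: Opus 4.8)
The plan is to prove Theorem~\ref{yglob} as a \emph{persistence of regularity} statement, arguing pathwise in $\omega$. The local theory of Theorem~\ref{ycor} already furnishes, for $\Pas\ \omega$, a maximal $H^2$-solution $y$ on a random interval $[0,\tau^*(x)(\omega))$ together with the blowup alternative: if $\tau^*(x)(\omega)<T$ then $\|y(t)(\omega)\|_{H^2}\to\infty$ as $t\to\tau^*(x)(\omega)$. Regarded on its interval of existence as the $H^1$-solution, this $y$ obeys the standing hypothesis
\[
M:=\|y\|_{L^{\infty}(0,T;H^1)}+\|y\|_{L^q(0,T;W^{1,p})}<\infty,\qquad (p,q)=\Bigl(\alpha+1,\tfrac{4(\alpha+1)}{d(\alpha-1)}\Bigr).
\]
It therefore suffices to show that, under this bound, $\|y(t)\|_{H^2}$ stays bounded on $[0,\tau^*(x)(\omega))$, which rules out blowup and forces $\tau^*(x)=T$; uniqueness in $C([0,T];H^2)$ is then inherited from Theorem~\ref{ycor}. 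Note that $(p,q)$ is exactly the $H^1$-admissible Strichartz pair attached to the power $\alpha$, so $M$ controls a genuine Strichartz norm at the $W^{1,p}$ level.

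Fix $\omega$ and suppose, for contradiction, that $\tau^*:=\tau^*(x)(\omega)<T$. On a subinterval $[a,b]\subset[0,\tau^*)$ I would use the Duhamel representation for the modified evolution family $U(t,s)$ generated by $A(t)$,
\[
y(t)=U(t,a)y(a)-\lambda i\int_a^t U(t,s)\,|y(s)|^{\alpha-1}y(s)\,ds,
\]
and combine three ingredients. First, the modified homogeneous and inhomogeneous $H^2$-Strichartz estimates for $U(t,s)$ already underlying Theorem~\ref{ycor}, whose constants depend on $T$ and, through $b=2\nabla W$ and $c=\sum_j(\partial_jW)^2+\Delta W$, on $\omega$, but not on the length of the subinterval. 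Second, since $\alpha\ge2$ the nonlinearity $g(z)=|z|^{\alpha-1}z$ obeys $|\nabla g(y)|\lesssim|y|^{\alpha-1}|\nabla y|$ and $|\nabla^2 g(y)|\lesssim|y|^{\alpha-1}|\nabla^2 y|+|y|^{\alpha-2}|\nabla y|^2$; feeding these into Hölder's inequality with the Sobolev embeddings available precisely when $d\le7$ and $2\le\alpha<1+4/(d-4)^+$ yields a nonlinear estimate of the schematic shape
\[
\bigl\||y|^{\alpha-1}y\bigr\|_{L^{q'}(I;W^{2,p'})}\ \lesssim\ \Lambda(M)\,\varepsilon(I)\,\|y\|_{L^{q}(I;W^{2,p})}+\Lambda(M)
\]
on any subinterval $I\subset[0,\tau^*)$, with $\Lambda$ increasing and $\varepsilon(I)\to0$ as $\|y\|_{L^q(I;W^{1,p})}\to0$. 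Third, the commutator relations $[\nabla,A(t)]$ and $[\Delta,A(t)]$: applying $\nabla$, resp.\ $\Delta$, to the equation produces extra terms of order $\le1$, resp.\ $\le2$, in $y$, with coefficients built from at most four derivatives of the $\phi_j$ — this is exactly where (H1$)_2$ with $|\gamma|\le4$ enters — and these are absorbed into the $M$-controlled lower-order part of the estimate.

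With these in hand I would run a finite-step bootstrap. Since $\|y\|_{L^q(0,T;W^{1,p})}<\infty$, choose $0=t_0<t_1<\dots<t_K=\tau^*$ with $K$ finite so that the Strichartz constant times $\Lambda(M)\varepsilon(I_k)$ is $\le\tfrac12$ on each $I_k=[t_{k-1},t_k]$; the inhomogeneous Strichartz estimate on $I_k$, after absorbing the self-feedback term, then gives
\[
\|y\|_{L^{\infty}(I_k;H^2)}+\|y\|_{L^q(I_k;W^{2,p})}\ \le\ C\|y(t_{k-1})\|_{H^2}+C\Lambda(M),
\]
and iterating over $k=1,\dots,K$ bounds $\sup_{[0,\tau^*)}\|y(t)\|_{H^2}$ by a finite constant depending only on $\|x\|_{H^2}$, $M$, $K$ and the (pathwise) Strichartz constants. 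This contradicts the blowup alternative, so $\tau^*=T$ $\Pas$; the maximal $H^2$-solution is therefore global on $[0,T]$, and uniqueness follows from Theorem~\ref{ycor}. (The $X$-version, Theorem~\ref{H2glob}, then follows by the equivalence in Theorem~\ref{Xy}.) I expect the main obstacle to be precisely this nonlinear $W^{2,p}$-estimate in the presence of the non-semigroup, non-commuting operator $A(t)$: one cannot invoke the deterministic persistence-of-regularity theory directly, and must re-derive the inhomogeneous Strichartz bound for $U(t,s)$ while carefully tracking the Sobolev exponents produced by $\nabla^2(|y|^{\alpha-1}y)$, which is what forces the range $d\le7$, $2\le\alpha<1+4/(d-4)^+$; the $\omega$-dependence of all constants, finite but random, is handled by working pathwise throughout.
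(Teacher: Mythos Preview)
Your proposal is correct and follows the same persistence-of-regularity strategy as the paper: both argue pathwise, assume $\tau^*<T$, and use the $H^1$ a~priori bound together with the $H^2$-Strichartz estimate (Lemma~\ref{H2SE}) to control the nonlinearity in $L^{q'}(I;W^{2,p'})$. The differences are only in how the contradiction is extracted. The paper works on a single short interval $I=[\tau^*-\varepsilon,\tau^*+\varepsilon]$ and obtains smallness directly from the time factor $(2\varepsilon)^\theta$ arising from H\"older in time (so the $L^\infty(0,T;H^1)$ bound alone suffices for the $\alpha-1$ powers via the embedding $H^1\hookrightarrow L^{\alpha+1}$); this makes the Duhamel map a contraction past $\tau^*$, contradicting maximality directly. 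You instead partition $[0,\tau^*)$ into finitely many pieces on which $\|y\|_{L^q(I_k;W^{1,p})}$ is small, iterate an a~priori bound, and invoke the blowup alternative of Theorem~\ref{ycor}. Your route is the more ``textbook'' one and is slightly more robust (it would survive at the endpoint $\theta=0$), while the paper's is shorter. Note also that the commutator bookkeeping you anticipate for $[\nabla,A(t)]$ and $[\Delta,A(t)]$ is already absorbed into the proof of Lemma~\ref{H2SE}, so in the proof of Theorem~\ref{yglob} itself you may simply quote (\ref{SEH^2}) without re-deriving those terms.
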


\section{Deterministic Strichartz estimate}
\setcounter{equation}{0}

For the proof of  Theorem \ref{ymain} and Theorem \ref{ycor}, 
we discuss
the evolution operators and the corresponding Strichartz estimate which was shown in \cite{BRZ14} and \cite{BRZ16}. 
\begin{Lemma}
\label{evo}
Assume (H1$)_2$. For $\Pas \ \omega,$ the operator $A(t)$ defined in (\ref{EO}) generates evolution operators $U(t,s)=U(t,s,\omega), \ 0\le s\le t\le T,$ in the spaces $H^2(\mathbb{R}^d)$. Moreover, for each $x\in H^2(\mathbb{R}^d),$ the 
process $[s,T]\ni t \mapsto U(t,s)x\in H^2(\mathbb{R}^d)$
is continuous and $(\mathcal{F}_t)$-adapted, hence progressively measurable with respect to the filtration $(\mathcal{F}_t)_{t\ge s}$.
\end{Lemma}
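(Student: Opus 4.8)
The plan is to verify the hypotheses of Kato's theory of linear evolution equations of ``hyperbolic'' type, with the pair of spaces $X=L^2(\mathbb{R}^d)$, $Y=H^2(\mathbb{R}^d)$ and the isomorphism $S=(1-\Delta)\colon Y\to X$, exactly as in the $L^2$- and $H^1$-cases treated in \cite{BRZ14,BRZ16}; the only genuinely new ingredient is the verification of the intertwining hypothesis at the $H^2$-level, and this is where the strengthened assumption (H1$)_2$ (boundedness of $\partial^{\gamma}\phi_j$ for $|\gamma|\le 4$) enters. I would first fix, once and for all, an $\omega$ for which every path $t\mapsto\beta_j(t,\omega)$, $1\le j\le N$, is continuous on $[0,T]$; then all the coefficients below are bounded on $[0,T]$, with bounds depending only on $\sup_{t\le T}\sum_j|\beta_j(t,\omega)|$ and on the $C_b^4$-norms of the $\phi_j$, and the measurability statement is dealt with at the very end.

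The key structural observation is that, $W(t)$ being purely imaginary, $e^{\pm W(t)}$ are smooth unimodular functions whose spatial derivatives are bounded uniformly on $[0,T]$, so the multiplication operators $M_{e^{\pm W(t)}}$ are unitary on $L^2$, mutually adjoint and inverse, and bounded automorphisms of $H^2$. Reading off (\ref{EO}), $A(t)=M_{e^{-W(t)}}(-i\Delta)M_{e^{W(t)}}$ with $D(A(t))=M_{e^{-W(t)}}H^2=H^2$; taking adjoints and using $\overline{e^{W(t)}}=e^{-W(t)}$ gives $A(t)^{*}=-A(t)$. Hence, by Stone's theorem, each $A(t)$ generates a strongly continuous unitary group $e^{\sigma A(t)}=M_{e^{-W(t)}}e^{-i\sigma\Delta}M_{e^{W(t)}}$ on $L^2$, and in particular $\{A(t)\}_{t\in[0,T]}$ is a stable family on $X=L^2$ with stability constants $(1,0)$ --- Kato's first hypothesis, here for free.

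Next I would check the remaining hypotheses. That $Y=H^2\subset D(A(t))$ with $\sup_{t\le T}\|A(t)\|_{\mathcal{L}(H^2,L^2)}<\infty$ is clear, and $t\mapsto A(t)\in\mathcal{L}(H^2,L^2)$ is continuous because $b(t)=2\nabla W(t)$ and $c(t)$ depend continuously on $t$ in the $L^{\infty}$-norm through the continuous paths $\beta_j$. For the intertwining hypothesis one computes $SA(t)S^{-1}=A(t)+B(t)$ with $B(t)=-[\Delta,A(t)](1-\Delta)^{-1}$, and $[\Delta,A(t)]$ is a differential operator of order at most two whose coefficients involve the $\phi_j$ and their $\xi$-derivatives up to order $4$ (multiplied by the $\beta_j(t)$ or products thereof), the top order entering only through the zeroth-order term $\Delta c$. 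By (H1$)_2$ these coefficients are bounded, uniformly on $[0,T]$, so $[\Delta,A(t)]\in\mathcal{L}(H^2,L^2)$, hence $B(t)=-[\Delta,A(t)](1-\Delta)^{-1}\in\mathcal{L}(L^2)$ with $\sup_{t\le T}\|B(t)\|_{\mathcal{L}(L^2)}<\infty$ and $t\mapsto B(t)$ continuous. This is the one place where $|\gamma|\le 4$ is used, in contrast with the $|\gamma|\le 2,3$ of \cite{BRZ14,BRZ16}. With all of Kato's hypotheses verified, his theorem produces a unique evolution family $U(t,s)=U(t,s,\omega)$, $0\le s\le t\le T$, strongly continuous on $L^2$, leaving $H^2$ invariant, uniformly bounded in $\mathcal{L}(H^2)$, with $[s,T]\ni t\mapsto U(t,s)x$ continuous in $H^2$ for $x\in H^2$ and $\frac{d}{dt}U(t,s)x=A(t)U(t,s)x$ in $L^2$; this is the first assertion of the lemma.

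For adaptedness and measurability I would use that $U(t,s)$ is the limit --- uniform in $L^2$, along, say, the dyadic partitions $\pi_n$ of $[s,t]$ --- of the approximants $U_{\pi_n}(t,s)$ obtained by composing the groups $e^{\sigma A(t_k)}$ over the partition points. For each partition point $t_k\le t$ the map $\omega\mapsto e^{\sigma A(t_k,\omega)}x=M_{e^{-W(t_k,\omega)}}e^{-i\sigma\Delta}M_{e^{W(t_k,\omega)}}x$ is a continuous function of $(\beta_j(t_k,\omega))_j$, hence $\mathcal{F}_{t_k}\subseteq\mathcal{F}_t$-measurable and $H^2$-valued, and compositions preserve this. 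Since Kato's construction also yields $\sup_n\|U_{\pi_n}(t,s)\|_{\mathcal{L}(H^2)}<\infty$, the $L^2$-convergence $U_{\pi_n}(t,s)x\to U(t,s)x$ upgrades to weak convergence in $H^2$, so $\omega\mapsto U(t,s,\omega)x$ is an $\mathcal{F}_t$-measurable $H^2$-valued map for every $x\in H^2$; consequently, for fixed $s$, the $H^2$-valued process $(U(t,s)x)_{t\ge s}$ is $(\mathcal{F}_t)_{t\ge s}$-adapted, and being continuous in $t$ it is progressively measurable. I expect the main obstacle to be the $H^2$-level intertwining estimate of the previous paragraph --- bounding $[\Delta,A(t)](1-\Delta)^{-1}$ in $\mathcal{L}(L^2)$ uniformly on $[0,T]$ --- since everything else reduces to a pathwise application of Kato's theory together with routine measurability bookkeeping.
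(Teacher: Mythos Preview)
Your proof is correct and takes a genuinely different route from the paper. The paper's argument is essentially a citation: it invokes Theorem~1.1 of Doi \cite{D96} to assert that the linear Cauchy problem $i\partial_t u=\Delta u+cu+b\cdot\nabla u+f$ with $H^2$ data and $f\in L^1(s,T;H^2)$ has a unique solution in $C([s,T];H^2)$, then defines $U(t,s)x$ as that solution map and defers to Lemma~3.3 of \cite{BRZ14} for the measurability. You instead verify Kato's abstract hyperbolic-evolution hypotheses directly, exploiting the conjugation structure $A(t)=e^{-W(t)}(-i\Delta)e^{W(t)}$ and the purely imaginary nature of $W$ to obtain skew-adjointness (hence stability with constants $(1,0)$) for free, and then check the $H^2$-admissibility by bounding $[\Delta,A(t)](1-\Delta)^{-1}$ in $\mathcal{L}(L^2)$. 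Your route is more self-contained and makes transparent exactly where the four-derivative requirement in (H1$)_2$ enters (through the zeroth-order coefficient $\Delta c$ in the commutator), whereas the paper hides this inside Doi's hypotheses; it also supplies the measurability via the product-approximation picture rather than by reference. The paper's approach is shorter but less informative; both are valid.
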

\begin{proof}
The existence of the evolution operator $U$ generated by $A(t)$ is a direct consequence
of the fact that, for ($\Pas$) every $\omega \in \Omega$, the Cauchy problem 
\begin{align*}
\begin{dcases}
\frac{dy}{dt}=A(t)y, \\
y(s)=x, \quad s\le t<\infty,
\end{dcases}
\end{align*}
for each $x\in H^2(\mathbb{R}^d)$ has a unique solution $y \in C([s,T];H^2(\mathbb{R}^d))$ for all $T>s$. 

Indeed, by Theorem 1.1 in \cite{D96}, under our assumptions on $c$ and $b$, for each $x \in H^2$ and $f \in L^1(s,T;H^2)$, the Cauchy problem
\begin{align*}
\begin{dcases}
i\frac{\partial u}{\partial t}&=\Delta u+cu+b\cdot \nabla u+f \quad \text{in} \ (s,T)\times \mathbb{R}^d, \\
u(s)&=x,
\end{dcases}
\end{align*}
has a unique solution $u \in C([s,T];H^2)$. Therefore, an evolution operator  $U(t,s) \in L(H^2,H^2)$ 
is defined by $U(t,s)x=y(t), \ 0\le s\le t\le T$. For details, see Lemma 3.3 in \cite{BRZ14} (see also \cite{Z14}).
\end{proof}
We give a precise definition of local smoothing spaces introduced in \cite{MMT08} as used in this paper.
\begin{Definition}
\label{locsp}
Set $B_0=\{ |\xi|\le 2\}, \ B_j=\{ 2^j\le |\xi|\le 2^{j+1}\}, \ j\ge1,$ and $B_{<j}=\{ |\xi|\le 2^j\}.$ Let $A_j=[0, T]\times B_j, \ j\ge 0, \ A_{<j}=[0, T]\times B_{<j}, \ j\ge 1.$ We consider a dyadic partition of unity of frequency, i.e. $\displaystyle 1=\sum_{k=-\infty}^{\infty}S_k(D).$ We say a function $f$ is localized at frequency $2^k$, if $\hat{f}$ is supported in $\{ 2^{k-1}<|\xi|<2^{k+1}\}.$ The functions localized at frequency $2^k$ are measured using the norm
\begin{align*}
\|u\|_{X_k(T)}&=\|u\|_{L^2(A_0)}+\sup_{j>0}\|\langle \xi \rangle^{-\frac{1}{2}}u\|_{L^2(A_j)}, \ k\ge 0, \\
\|u\|_{X_k(T)}&=2^{\frac{k}{2}}\|u\|_{L^2(A_{<-k})}+\sup_{j\ge -k}\|(|\xi|+2^{-k})^{-\frac{1}{2}}u\|_{L^2(A_j)}, \ k<0,
\end{align*}
where $\langle \xi \rangle =\sqrt{1+|\xi|^2}.$ Then the local smoothing space $\widetilde{X}_{[0,T]}$ is defined by the norm 
\begin{align*}
\|u\|^2_{\widetilde{X}_{[0,T]}}&=\|\langle \xi \rangle^{-1}u\|^2_{L^2([0,T]\times \mathbb{R}^d)}+\sum_{k=-\infty}^{\infty}2^k\|S_ku\|^2_{X_k(T)}, \ d\not=2, \\
\|u\|^2_{\widetilde{X}_{[0,T]}}&=\|\langle \xi \rangle^{-1}(\log (2+|\xi|))^{-1}u\|^2_{L^2([0,T]\times \mathbb{R}^d)}+\sum_{k=-\infty}^{\infty}2^k\|S_ku\|^2_{X_k(T)}, \ d=2.
\end{align*}
\end{Definition}
We modify the deterministic Strichartz estimate in order to deal with $H^2$ solution.
\begin{Lemma}
\label{H2SE}
(Deterministic Strichartz estimate) \\
Assume (H1$)_2$. Then for any $T>0, \ u_0\in H^2$ and $f\in L^{q'_2}(0,T;W^{2,p'_2}),$ the solution of
\begin{align}
\label{uH^2}
u(t)=U(t,0)u_0+\int_0^tU(t,s)f(s)ds, \quad 0\le t\le T,
\end{align}
satisfies the estimates
\begin{align}
\label{SEL^2}
\|u\|_{L^{q_1}(0,T;L^{p_1})}&\le C_T(\|u_0\|_{L^2}+\|f\|_{L^{q'_2}(0,T;L^{p'_2})}), \\
\label{SEH^1}
\|u\|_{L^{q_1}(0,T;W^{1,p_1})}&\le C_T(\|u_0\|_{H^1}+\|f\|_{L^{q'_2}(0,T;W^{1,p'_2})}),
\end{align}
and
\begin{align}
\label{SEH^2}
\|u\|_{L^{q_1}(0,T;W^{2,p_1})}\le C_T(\|u_0\|_{H^2}+\|f\|_{L^{q'_2}(0,T;W^{2,p'_2})}),
\end{align}
where $(p_1,q_1)$ and $(p_2,q_2)$ are Strichartz pairs, namely
\begin{eqnarray*}
(p_i,q_i)\in [2,\infty]\times [2,\infty]:\frac{2}{q_i}=\frac{d}{2}-\frac{d}{p_i}, \quad \text{if} \ d\not= 2, \\
(p_i,q_i)\in [2,\infty)\times (2,\infty]:\frac{2}{q_i}=\frac{d}{2}-\frac{d}{p_i}, \quad \text{if} \ d=2.
\end{eqnarray*}
Furthermore, the process $C_t, \ t\ge 0,$ can be taken to be $(\mathcal{F}_t)$-progressively measurable, increasing and continuous.
\end{Lemma}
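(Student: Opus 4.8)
\textbf{Proof proposal for Lemma \ref{H2SE}.}

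The plan is to build the $H^2$-estimate \eqref{SEH^2} from the already-available $L^2$-estimate \eqref{SEL^2} by differentiating the Duhamel formula \eqref{uH^2} and using that the evolution operator $U(t,s)$ almost commutes with $\nabla$ and $\Delta$, the commutators being lower-order operators whose coefficients are controlled by assumption (H1$)_2$. First I would recall that $A(t)=-i(\Delta+b\cdot\nabla+c)$ with $b=2\nabla W$ and $c=\sum_j(\partial_jW)^2+\Delta W$, so by (H1$)_2$ the coefficients $b,c$ and all their spatial derivatives up to order $2$ are bounded, with constants that are $(\mathcal F_t)$-progressively measurable, increasing, continuous processes (this is the source of the process $C_t$). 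The estimates \eqref{SEL^2} and the inhomogeneous local smoothing bound in the space $\widetilde X_{[0,T]}$ — which is exactly what is needed to absorb the first-order term $b\cdot\nabla u$ that is \emph{not} a Strichartz-admissible perturbation — are the content of the cited results from \cite{BRZ14,BRZ16,MMT08}; I would quote them as a black box. The point of Lemma \ref{H2SE} is the upgrade to $W^{1,p}$ and $W^{2,p}$.

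Second, for \eqref{SEH^1}, apply $\partial_k$ to \eqref{uH^2}. Writing $u$ as the solution of $i\partial_t u=\Delta u+b\cdot\nabla u+cu+f$ (with data $u_0$), the function $v=\partial_k u$ solves the same type of equation with data $\partial_k u_0\in H^1\subset L^2$ and forcing $\partial_k f-(\partial_k b)\cdot\nabla u-(\partial_k c)u$. The new forcing terms $(\partial_k b)\cdot\nabla u$ and $(\partial_k c)u$ have $L^\infty$-bounded coefficients, so after choosing a suitable auxiliary Strichartz pair one estimates them in $L^{q_2'}(0,T;L^{p_2'})$ by $\nabla u$ and $u$ in a dual-admissible space, which by Hölder in time on $[0,T]$ and \eqref{SEL^2} (applied to $u$ itself and, after another differentiation, bootstrapped) is bounded by $C_T(\|u_0\|_{H^1}+\|f\|_{L^{q_2'}(0,T;W^{1,p_2'})})$; the term $(\partial_k b)\cdot\nabla u$ is the delicate one and is handled by pairing the local-smoothing norm $\|u\|_{\widetilde X_{[0,T]}}$ against its dual, exactly as in the $L^2$ theory but now one derivative higher. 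Summing over $k$ and combining with \eqref{SEL^2} gives \eqref{SEH^1}.

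Third, for \eqref{SEH^2}, iterate: apply $\partial_j\partial_k$ to \eqref{uH^2}, so that $w=\partial_j\partial_k u$ solves the Schrödinger-type equation with data $\partial_j\partial_k u_0\in L^2$ and forcing
\[
\partial_j\partial_k f-(\partial_j\partial_k b)\cdot\nabla u-(\partial_k b)\cdot\nabla\partial_j u-(\partial_j b)\cdot\nabla\partial_k u-(\partial_j\partial_k c)u-(\partial_k c)\partial_j u-(\partial_j c)\partial_k u.
\]
Every coefficient appearing here — up to two derivatives of $b$, hence (since $b=2\nabla W$) up to three derivatives of $W$, and up to two derivatives of $c$, hence up to $\Delta W$ plus two more, i.e.\ up to four derivatives of $W$ — is bounded by (H1$)_2$, which is precisely why the hypothesis requires $|\gamma|\le s+2=4$. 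The terms involving $\nabla u$ and $\nabla^2 u$ paired against the first-order coefficients are put into the local-smoothing/dual-Strichartz framework and controlled by $\|u\|_{L^{q_1}(0,T;W^{1,p_1})}+\|u\|_{L^{q_1}(0,T;W^{2,p_1})}$ (the lower one already bounded by \eqref{SEH^1}), while the zeroth-order terms go through Hölder in time and \eqref{SEH^1}. This yields a closed estimate $\|u\|_{L^{q_1}(0,T;W^{2,p_1})}\le C_T(\|u_0\|_{H^2}+\|f\|_{L^{q_2'}(0,T;W^{2,p_2'})})$ after a standard continuity/bootstrap argument in $T$ (or by first proving it on a short interval and iterating, using that $C_T$ is increasing). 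Finally, the measurability, monotonicity and continuity of $C_t$ are inherited from the $L^2$-level constant of \cite{BRZ14,BRZ16} together with the corresponding properties of $\sup_{|\gamma|\le4}\|\partial^\gamma W(t)\|_{L^\infty}$, which are continuous adapted processes.

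I expect the main obstacle to be the first-order commutator terms $(\partial^\beta b)\cdot\nabla(\partial^{\gamma}u)$ with $|\beta|+|\gamma|$ equal to one or two: these are \emph{not} controlled by any Strichartz norm alone — a derivative-loss problem intrinsic to variable-coefficient Schrödinger equations — and forcing them to close requires the local smoothing space $\widetilde X_{[0,T]}$ of Definition \ref{locsp} and the gain of half a derivative it encodes, quoted from \cite{MMT08,BRZ14,BRZ16}. Everything else (the zeroth-order terms, the Hölder-in-time arguments, summing over multi-indices, tracking the dependence of $C_T$) is routine once this structural point is in place.
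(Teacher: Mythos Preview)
Your proposal is correct and follows essentially the same strategy as the paper: differentiate the equation, treat the commutator terms as forcing, and use the local smoothing space $\widetilde X$ to handle the first-order pieces that Strichartz alone cannot absorb. Two small differences are worth noting. First, the paper applies $\Delta$ once rather than each $\partial_j\partial_k$, which streamlines the bookkeeping. Second, and more substantively, the paper avoids your bootstrap entirely: the commutator terms that contain $\nabla^2 u$ (your $(\partial_j b)\cdot\nabla\partial_k u$, etc.) are written in the symmetric form $(D_j\nabla\widetilde b^j+\nabla\widetilde b^j D_j+\nabla\widetilde c)\nabla u$, i.e.\ as a first-order operator acting on $\nabla u$, placed in $\widetilde X'_{[0,T]}$, and then bounded by $\kappa_T\|\nabla u\|_{\widetilde X_{[0,T]}}$ via Proposition~2.3(a) of \cite{MMT08}. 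Since $\|\nabla u\|_{\widetilde X_{[0,T]}}$ is already controlled by the $H^1$-level estimate \eqref{XEH^1}, the right-hand side closes immediately in terms of $\|u_0\|_{H^2}+\|f\|_{L^{q_2'}W^{2,p_2'}}$ with no circularity and no continuity argument in $T$. Your short-time absorption would also work, but recognizing that these terms are one order lower than they first appear is the cleaner route.
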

\begin{proof}
The key estimates of the proof are the following known results, for $T>0$,
\begin{align}
\label{XEL^2}
\|u\|_{L^{q_1}(0,T;L^{p_1})\cap \widetilde{X}_{[0,T]}}&\le C_T(\|u_0\|_{L^2}+\|f\|_{L^{q'_2}(0,T;L^{p'_2})+\widetilde{X}'[0,T]}), \\
\label{XEH^1}
\|\nabla u\|_{L^{q_1}(0,T;L^{p_1})\cap \widetilde{X}_{[0,T]}}&\le C_T(\|u_0\|_{H^1}+\|f\|_{L^{q'_2}(0,T;W^{1,p'_2})+\widetilde{X}'[0,T]}),
\end{align}
where $\widetilde{X}'$ is dual space of $\widetilde{X}$.
Indeed, (\ref{XEL^2}) is shown in \cite{BRZ14} and (\ref{XEH^1}) is shown in \cite{BRZ16} by using (\ref{XEL^2}). (\ref{SEL^2}) and (\ref{SEH^1}) are direct conclusion from (\ref{XEL^2}) and (\ref{XEH^1}) respectively. So we give a proof of (\ref{SEH^2}) only. The proof 
is based on Theorem 1.13 and Proposition 2.3(a) in \cite{MMT08}. Let us use the notation $D_t:=-i\partial_t, \ D_j:=-i\partial_j, \ 1\le j\le d,$ to rewrite (\ref{uH^2}) in the form 
\[ D_tu=(D_ja^{jk}D_k+D_j\widetilde{b}^j+\widetilde{b}^jD_j+\widetilde{c})u-if, \]
with $a^{jk}=\delta_{jk}, \ \widetilde{b}^j=-i\partial_jW_t$ and $\displaystyle \widetilde{c}=-\sum_{j=1}^d(\partial_jW)^2, \ 1\le j,k\le d$.

Direct computations show
\begin{align}
\label{Delta}
D_t\Delta u =&(-\Delta +D_j\widetilde{b}^j+\widetilde{b}^jD_j+\widetilde{c})\Delta u+2(D_j\nabla \widetilde{b}^j+\nabla \widetilde{b}^jD_j+\nabla \widetilde{c})\nabla u \nonumber \\
&+(D_j\Delta \widetilde{b}^j+\Delta \widetilde{b}^jD_j+\Delta \widetilde{c})u-i\Delta f.
\end{align}
We regard (\ref{Delta}) as the equation for the unknown $\Delta u$ and treat the lower order term $(D_j\nabla \widetilde{b}^j+\nabla \widetilde{b}^jD_j+\nabla \widetilde{c})\nabla u$ and $(D_j\Delta \widetilde{b}^j+\Delta \widetilde{b}^jD_j+\Delta \widetilde{c})u$ as equal terms with $\Delta f$. This leads to
\begin{align}
\label{deltau}
\Delta u(t)=U(t,0)\Delta u_0 &+\int_0^tU(t,s)[2i(D_j\nabla \tilde{b}^j(s)+\nabla \tilde{b}^j(s)D_j+\nabla \tilde{c}(s))\nabla u \nonumber \\
&+i(D_j\Delta \tilde{b}^j(s)+\Delta \tilde{b}^j(s)D_j+\Delta \tilde{c}(s))u+\Delta f(s)]ds.
\end{align}
Hence applying (\ref{XEL^2}),(\ref{XEH^1}) to (\ref{deltau}) and then using Theorem 1.13 and Proposition 2.3(a) in \cite{MMT08} to control the lower order term, we derive that 
\begin{align}
\label{lowerest}
&\|\Delta u\|_{L^{q_1}(0,T;L^{p_1})\cap \tilde{X}_{[0,T]}} \nonumber \\
&\le C_T[ \|\Delta u_0\|_{L^2}+\|2i(D_j\nabla \tilde{b}^j+\nabla \tilde{b}^jD_j+\nabla \tilde{c})\nabla u \nonumber \\
&\hspace{10em} +i(D_j\Delta \tilde{b}^j+\Delta \tilde{b}^jD_j+\Delta \tilde{c})u+\Delta f\|_{L^{q'_2}(0,T;L^{p'_2})+\tilde{X'}_{[0,T]}}] \nonumber \\
&\le C_T[ \|\Delta u_0\|_{L^2}+\|2i(D_j\nabla \tilde{b}^j+\nabla \tilde{b}^jD_j+\nabla \tilde{c})\nabla u\|_{\tilde{X'}_{[0,T]}} \nonumber \\
&\hspace{10em} +\|i(D_j\Delta \tilde{b}^j+\Delta \tilde{b}^jD_j+\Delta \tilde{c})u\|_{\tilde{X'}_{[0,T]}}+\|\Delta f\|_{L^{q'_2}(0,T;L^{p'_2})}] \nonumber \\
&\le C_T\left[ \|\Delta u_0\|_{L^2}+\kappa^1_T\|\nabla u\|_{\tilde{X}_{[0,T]}}+\kappa^2_T\|u\|_{\tilde{X}_{[0,T]}}+\|\Delta f\|_{L^{q'_2}(0,T;L^{p'_2})} \right] \\
&\le C_T[\|\Delta u_0\|_{L^2}+C_T\kappa^1_T(\|u_0\|_{H^1}+\|f\|_{L^{q'_2}(0,T;W^{1,p'_2})}) \nonumber \\
&\hspace{10em} +C_T\kappa^2_T(\|u_0\|_{L^2}+\|f\|_{L^{q'_2}(0,T;L^{p'_2})})+\|\Delta f\|_{L^{q'_2}(0,T;L^{p'_2})}] \nonumber \\
&=C_T(C_T\kappa^1_T+C_T\kappa^2_T+1)\left[ \|u_0\|_{H^2}+\|f\|_{L^{q'_2}(0,T;W^{2,p'_2})} \right], \nonumber 
\end{align}
here we are faced at the term of the third derivative of $\tilde{b}^j$ which corresponds to the 4th derivative of $W$ 
which means the 4th derivative of $\phi_j$.
This together with (\ref{SEL^2}),(\ref{SEH^1}) yields the estimate (\ref{SEH^2}). 
The $H^2$ continuity follows from Strichartz estimate (\ref{SEH^2}) in the usual way.
Now, we set
\begin{align}
C_t&=\sup \{\|U(\cdot,0)u_0\|_{L^{q_1}(0,t;W^{2,p_1})};\|u_0\|_{H^2}\le 1 \} \nonumber \\
& \quad +\sup \left\{ \left\| \int_0^{\cdot}U(\cdot,s)f(s)ds \right\|_{L^{q_1}(0,t;W^{2,p_1})};||f||_{L^{q'_2}(0,t;W^{2,p'_2})}=1 \right\}.
\end{align}
Then it is analogous to the proof of Lemma 4.1 in \cite{BRZ14} that the stochastic process $C_t, \ t\ge 0$ is $(\mathcal{F}_t)$-progressively measurable, increasing, and continuous (see also \cite{Z14}).
\end{proof}
\begin{rmk}
The estimate \eqref{XEL^2} holds only on the bounded interval $[0,T]$. See \cite[Appendix]{BRZ14} for details. Therefore, the main result statements are that well-posedness is obtained whenever $T>0$ is fixed.
\end{rmk}

\section{Proof of Theorem \ref{ymain} and Theorem \ref{ycor}}
\setcounter{equation}{0}

\begin{rmk}
The proof of Theorem \ref{ymain} is shown for the case $1<\alpha\le 1+\frac{2}{(d-2)^+}$ only.
For $3\le d\le 7$, Theorem \ref{ycor} yields Theorem \ref{ymain}.
\end{rmk}
\begin{proof}[Proof of Theorem \ref{ymain}]
Set $g(y)=|y|^{\alpha-1}y$. We solve the weak equation (\ref{RWF}) in the mild sense, namely 
\begin{align}
y(t)=U(t,0)x-\lambda i\int_0^tU(t,s)g(y(s))ds.
\end{align}
We consider the following map   
\[ F(y)(t)=U(t,0)x-\lambda i\int_0^tU(t,s)g(y(s))ds. \]

The local solutions $\{ (y_n,\tau_n) \}_{n\ge 1}$ of (\ref{RSNLS}) will be constructed explicitly below 

\textbf{Step 1.} First, we find $(y_1,\tau_1)$. 
Choose the Strichartz pair $(p,q)=(\frac{4\alpha}{\alpha+1},\frac{8\alpha}{d(\alpha-1)})$.
Fix $\omega \in \Omega$ and consider $F$ on the set
\[ \mathcal{Y}^{\tau_1}_{M_1}=\{ y\in L^{\infty}(0,\tau_1;L^2);\|y\|_{str(\tau_1)}+\|\partial_ty\|_{str(\tau_1)}+\|\Delta y\|_{L^{\infty}(0,\tau_1;L^2)}\le M_1 \}, \]
where $\|y\|_{str(t)}:=\|y\|_{L^{\infty}(0,t;L^2)}+\|y\|_{L^{q}(0,t;L^{p})}, \ \tau_1=\tau_1(\omega)\in (0,T]$ and $M_1=M_1(\omega)>0$ are random variables. The distance is defined by $d(y_1,y_2)=\|y_1-y_2\|_{str}$. 
We differentiate with respect to $t$,
\begin{align}
\label{partialt1}
\partial_t \left( \int_0^tU(t,s)g(y(s))ds \right)&=\int_0^t\partial_t U(t,s)g(y(s))ds+U(t,t)g(y(t)) \nonumber \\
&=\int_0^tA(t)U(t,s)g(y(s))ds+g(y(t)).
\end{align}
Also, we change variables
\[ \int_0^tU(t,s)g(y(s))ds=\int_0^tU(t,t-s)g(y(t-s))ds, \]
and from
\begin{align*}
\partial_t U(t,s)x&=A(t)U(t,s)x, \ t\ge s, \\
\partial_s U(t,s)x&=-U(t,s)A(s)x, \ t\ge s,
\end{align*}
we have
\begin{align}
\label{partialt2}
& \partial_t \left( \int_0^tU(t,s)g(y(s))ds \right)=\partial_t \left( \int_0^tU(t,t-s)g(y(t-s))ds \right) \nonumber \\
&=\int_0^t\partial_t\left( U(t,t-s)g(y(t-s)) \right) ds+U(t,0)g(y(0)) \\
&=\int_0^t\{A(t)U(t,s)-U(t,s)A(s)\}g(y(s))ds+\int_0^tU(t,s)(\partial_tg)(y(s))ds \nonumber \\
&\quad +U(t,0)g(x) \nonumber \\
&=:I+\int_0^tU(t,s)(\partial_tg)(y(s))ds+U(t,0)g(x). \nonumber 
\end{align}
We consider $I$. By (\ref{deltau}), we have
\begin{align*}
& \{A(t)U(t,s)-U(t,s)A(s)\}g(y(s)) \\
=&\{-i(\Delta+b(t)\cdot \nabla+c(t))U(t,s)+iU(t,s)(\Delta+b(s)\cdot\nabla+c(s))\}g(y(s)) \\
=&-i(\Delta U(t,s)-U(t,s)\Delta)g(y(s)) \\
&+\{-i(b(t)\cdot \nabla+c(t))U(t,s)+iU(t,s)(b(s)\cdot\nabla+c(s))\}g(y(s)) \\
=&-i\int_s^tU(t,\tau)[2i(D_j\nabla\widetilde{b}^j(\tau)+\nabla\widetilde{b}^j(\tau)D_j+\nabla\widetilde{c}(\tau))\nabla(U(\tau,s)g(y(s))) \\
& \qquad \qquad \qquad \qquad  +i(D_j\Delta\widetilde{b}^j(\tau)+\Delta\widetilde{b}^j(\tau)D_j+\Delta\widetilde{c}(\tau))U(\tau,s)g(y(s))]d\tau \\
&+\{-i(b(t)\cdot \nabla+c(t))U(t,s)+iU(t,s)(b(s)\cdot\nabla+c(s))\}g(y(s)).
\end{align*}
Therefore, we get
\begin{align*}
I=&-i\int_0^t\int_s^tU(t,\tau)[2i(D_j\nabla\widetilde{b}^j(\tau)+\nabla\widetilde{b}^j(\tau)D_j+\nabla\widetilde{c}(\tau))\nabla(U(\tau,s)g(y(s))) \\
& \qquad \qquad \qquad \qquad  +i(D_j\Delta\widetilde{b}^j(\tau)+\Delta\widetilde{b}^j(\tau)D_j+\Delta\widetilde{c}(\tau))U(\tau,s)g(y(s))]d\tau ds \\
&-i\int_0^tb(t)\cdot\nabla U(t,s)g(y(s))ds-i\int_0^tc(t)U(t,s)g(y(s))ds \\
&+i\int_0^tU(t,s)b(s)\cdot \nabla g(y(s))ds+i\int_0^tU(t,s)c(s)g(y(s))ds.
\end{align*}
Therefore, put this into (\ref{partialt2}) to have
\begin{align*}
& \partial_t \left( \int_0^tU(t,s)g(y(s))ds \right) \\
= &-i\int_0^t\int_s^tU(t,\tau)[2i(D_j\nabla\widetilde{b}^j(\tau)+\nabla\widetilde{b}^j(\tau)D_j+\nabla\widetilde{c}(\tau))\nabla(U(\tau,s)g(y(s))) \\
& \qquad \qquad \qquad \qquad  +i(D_j\Delta\widetilde{b}^j(\tau)+\Delta\widetilde{b}^j(\tau)D_j+\Delta\widetilde{c}(\tau))U(\tau,s)g(y(s))]d\tau ds \\
&-i\int_0^tb(t)\cdot\nabla U(t,s)g(y(s))ds-i\int_0^tc(t)U(t,s)g(y(s))ds \\
&+i\int_0^tU(t,s)b(s)\cdot \nabla g(y(s))ds+i\int_0^tU(t,s)c(s)g(y(s))ds \\
& +\int_0^tU(t,s)(\partial_tg)(y(s))ds+U(t,0)g(x) .
\end{align*}
So
\begin{align*}
\partial_tF(y)(t) = & A(t)U(t,0)x-\lambda iU(t,0)g(x) \\
&-\lambda\int_0^t\int_s^tU(t,\tau)[2i(D_j\nabla\widetilde{b}^j(\tau)+\nabla\widetilde{b}^j(\tau)D_j+\nabla\widetilde{c}(\tau))\nabla(U(\tau,s)g(y(s))) \\
& \qquad \qquad \qquad \qquad  +i(D_j\Delta\widetilde{b}^j(\tau)+\Delta\widetilde{b}^j(\tau)D_j+\Delta\widetilde{c}(\tau))U(\tau,s)g(y(s))]d\tau ds \\
& -\lambda \int_0^tb(t)\cdot\nabla U(t,s)g(y(s))ds-\lambda \int_0^tc(t)U(t,s)g(y(s))ds \\
&+\lambda \int_0^tU(t,s)b(s)\cdot \nabla g(y(s))ds+\lambda \int_0^tU(t,s)c(s)g(y(s))ds \\
& -\lambda i\int_0^tU(t,s)(\partial_tg)(y(s))ds,
\end{align*}
where
\begin{align*}
&A(t)U(t,0)x \\
&=-i(\Delta +b(t,\xi) \cdot \nabla+c(t,\xi))U(t,0)x \\
&=-i\Delta U(t,0)x-ib(t,\xi)\cdot \nabla U(t,0)x-ic(t,\xi)U(t,0)x \\
&=-i\Delta U(t,0)x-2i(\nabla W)(t,\xi)\cdot \nabla U(t,0)x-i\sum_{j=1}^d(\partial_jW(t,\xi))^2U(t,0)x-i(\Delta W)(t,\xi)U(t,0)x. 
\end{align*}
Therefore,
\begin{align}
\label{testi}
\|\partial_tF(y)\|_{str(\tau_1)}&\lesssim \|\Delta U(\cdot,0)x\|_{L^{\infty}(0,\tau_1;L^2)\cap L^{q}(0,\tau_1;L^{p})}+\|(\nabla W)\cdot \nabla U(\cdot,0) x\|_{L^{\infty}(0,\tau_1;L^2)\cap L^{q}(0,\tau_1;L^{p})} \nonumber \\
& \quad +\left\|\sum_{j=1}^d(\partial_jW)^2U(\cdot,0)x\right\|_{L^{\infty}(0,\tau_1;L^2)\cap L^{q}(0,\tau_1;L^{p})} \nonumber \\
& \quad +\|(\Delta W)U(\cdot,0)x\|_{L^{\infty}(0,\tau_1;L^2)\cap L^{q}(0,\tau_1;L^{p})}+\|g(x)\|_{L^2} \nonumber \\
& \quad +\left\|\int_0^t\int_s^tU(t,\tau)[2i(D_j\nabla\widetilde{b}^j(\tau)+\nabla\widetilde{b}^j(\tau)D_j+\nabla\widetilde{c}(\tau))\nabla(U(\tau,s)g(y(s)))\right. \nonumber \\
& \qquad \qquad \left.+i(D_j\Delta\widetilde{b}^j(\tau)+\Delta\widetilde{b}^j(\tau)D_j+\Delta\widetilde{c}(\tau))U(\tau,s)g(y(s))]d\tau ds\right\|_{L^{\infty}(0,\tau_1;L^2)\cap L^{q}(0,\tau_1;L^{p})} \nonumber \\
& \quad +\left\|\int_0^tb(t)\cdot\nabla U(t,s)g(y)ds\right\|_{L^{\infty}(0,\tau_1;L^2)\cap L^{q}(0,\tau_1;L^{p})} \nonumber \\
& \quad +\left\|\int_0^tc(t)U(t,s)g(y)ds\right\|_{L^{\infty}(0,\tau_1;L^2)\cap L^{q}(0,\tau_1;L^{p})} \nonumber \\
& \quad +\left\|\int_0^tU(t,s)b(s)\cdot \nabla g(y)ds\right\|_{L^{\infty}(0,\tau_1;L^2)\cap L^{q}(0,\tau_1;L^{p})} \nonumber \\
& \quad +\left\|\int_0^tU(t,s)c(s)g(y)ds\right\|_{L^{\infty}(0,\tau_1;L^2)\cap L^{q}(0,\tau_1;L^{p})}+\|\partial_tg(y)\|_{L^{q'}(0,\tau_1;L^{p'})}.
\end{align}
We estimate each term of the right-hand side as follows
\begin{align}
\label{gx}
\|\Delta U(\cdot,0)x\|&_{L^{\infty}(0,\tau_1;L^2)\cap L^{q}(0,\tau_1;L^{p})}\lesssim \|x\|_{H^2}, \nonumber \\
\|(\nabla W)\cdot \nabla U(\cdot,0) x\|_{L^{\infty}(0,\tau_1;L^2)\cap L^{q}(0,\tau_1;L^{p})}&\lesssim \|\nabla W\|_{L^{\infty}(0,\tau_1;L^{\infty})}\|\nabla U(\cdot,0) x\|_{L^{\infty}(0,\tau_1;L^2)\cap L^{q}(0,\tau_1;L^{p})} \nonumber \\
&\lesssim \|x\|_{H^2}, \nonumber \\
\left\|\sum_{j=1}^d(\partial_jW)^2U(\cdot,0)x\right\|&_{L^{\infty}(0,\tau_1;L^2)\cap L^{q}(0,\tau_1;L^{p})}\lesssim \|x\|_{H^2}, \nonumber \\
\|(\Delta W)U(\cdot,0)x\|&_{L^{\infty}(0,\tau_1;L^2)\cap L^{q}(0,\tau_1;L^{p})}\lesssim \|x\|_{H^2}, \nonumber \\
\|g(x)\|_{L^2}&=\|x\|^{\alpha}_{L^{2\alpha}}\lesssim \|x\|^{\alpha}_{H^2}, \\
\left\|\int_0^tb(t)\cdot\nabla U(t,s)g(y)ds\right\|_{L^{\infty}(0,\tau_1;L^2)\cap L^{q}(0,\tau_1;L^{p})}&\lesssim \|b\|_{L^{\infty}(0,\tau_1;L^{\infty})}\left\|\nabla\int_0^t U(t,s)g(y)ds\right\|_{L^{\infty}(0,\tau_1;L^2)\cap L^{q}(0,\tau_1;L^{p})} \nonumber \\
&\lesssim \|g(y)\|_{L^{q'}(0,\tau_1;L^{p'})}+\|\nabla g(y)\|_{L^{q'}(0,\tau_1;L^{p'})} \lesssim \tau^{\theta}_1M_1^{\alpha}, \nonumber \\
\left\|\int_0^tc(t)U(t,s)g(y)ds\right\|_{L^{\infty}(0,\tau_1;L^2)\cap L^{q}(0,\tau_1;L^{p})}&\lesssim \|c\|_{L^{\infty}(0,\tau_1;L^{\infty})}\|g(y)\|_{L^{q'}(0,\tau_1;L^{p'})}\lesssim \tau^{\theta}_1M_1^{\alpha}, \nonumber \\
\left\|\int_0^tU(t,s)b(s)\cdot \nabla g(y)ds\right\|_{L^{\infty}(0,\tau_1;L^2)\cap L^{q}(0,\tau_1;L^{p})}&\lesssim \|b\|_{L^{\infty}(0,\tau_1;L^{\infty})}\left\|\int_0^tU(t,s)\nabla g(y)ds\right\|_{L^{\infty}(0,\tau_1;L^2)\cap L^{q}(0,\tau_1;L^{p})} \nonumber \\
&\lesssim \|g(y)\|_{L^{q'}(0,\tau_1;L^{p'})}+\|\nabla g(y)\|_{L^{q'}(0,\tau_1;L^{p'})}\lesssim \tau^{\theta}_1M_1^{\alpha}, \nonumber \\
\left\|\int_0^tU(t,s)c(s)g(y)ds\right\|_{L^{\infty}(0,\tau_1;L^2)\cap L^{q}(0,\tau_1;L^{p})}&\lesssim \|c\|_{L^{\infty}(0,\tau_1;L^{\infty})}\|g(y)\|_{L^{q'}(0,\tau_1;L^{p'})}\lesssim \tau^{\theta}_1M_1^{\alpha}, \nonumber \\
\|\partial_tg(y)\|_{L^{q'}(0,\tau_1;L^{p'})} \lesssim & \tau_1^{\theta}\|y\|^{\alpha-1}_{L^{\infty}(0,\tau_1;L^{2\alpha})}\|\partial_ty\|_{L^{q}(0,\tau_1;L^{p})} \lesssim \tau_1^{\theta}M_1^{\alpha}. \nonumber 
\end{align}
Here we used H$\ddot{\text{o}}$lder's inequality with
\[ 1-\frac{1}{p}=\frac{\alpha-1}{2\alpha}+\frac{1}{p}, \quad 1-\frac{1}{q}=\theta +\frac{1}{q}, \]
and  $q=\frac{8\alpha}{d(\alpha-1)}>2$ from the assumption of $\alpha$, and so $\theta>0$. We consider the 6th term on the right-hand side of (\ref{testi}).
From the Strichatrz estimate and $\phi_j$ is a first order polynomial, we have
\begin{align*}
& \left\|\int_0^t\int_s^tU(t,\tau)[2i(D_j\nabla\widetilde{b}^j(\tau)+\nabla\widetilde{b}^j(\tau)D_j+\nabla\widetilde{c}(\tau))\nabla (U(\tau,s)g(y(s))) \right. \\
& \qquad \qquad \left. +i(D_j\Delta\widetilde{b}^j(\tau)+\Delta\widetilde{b}^j(\tau)D_j+\Delta\widetilde{c}(\tau))U(\tau,s)g(y(s))]d\tau ds\right\|_{L^{\infty}(0,\tau_1;L^2)\cap L^{q}(0,\tau_1;L^{p})} \\
&\lesssim \left|\int_0^{\tau_1}\left\|\int_s^tU(t,\tau)[2i(D_j\nabla\widetilde{b}^j(\tau)+\nabla\widetilde{c}(\tau))\nabla (U(\tau,s)g(y(s))) \right.\right. \\
& \qquad \qquad \left.\left. +i(D_j\Delta\widetilde{b}^j(\tau)+\Delta\widetilde{c}(\tau))U(\tau,s)g(y(s))]d\tau\right\|_{L^{\infty}(0,\tau_1;L^2)\cap L^{q}(0,\tau_1;L^{p})} ds\right| \\
&\lesssim \left|\int_0^{\tau_1}\left\|2i(D_j\nabla\widetilde{b}^j(t)+\nabla\widetilde{c}(t))\nabla (U(t,s)g(y(s))) \right.\right. \\
& \qquad \qquad \left.\left. +i(D_j\Delta\widetilde{b}^j(t)+\Delta\widetilde{c}(t))U(t,s)g(y(s))\right\|_{L^{\infty}(0,\tau_1;L^2)} ds\right| \\
&\lesssim \|D_j\nabla\widetilde{b}^j(t)+\nabla\widetilde{c}(t)\|_{L^{\infty}(0,\tau_1;L^{\infty})}\left|\int_0^{\tau_1}\|\nabla (U(t,s)g(y(s)))\|_{L^{\infty}(0,\tau_1;L^2)} ds \right| \\
& \qquad \qquad +\|D_j\Delta\widetilde{b}^j(t)+\Delta\widetilde{c}(t)\|_{L^{\infty}(0,\tau_1;L^{\infty})}\left|\int_0^{\tau_1}\|U(t,s)g(y(s))\|_{L^{\infty}(0,\tau_1;L^2)}ds\right| \\
&\lesssim \left|\int_0^{\tau_1}\|U(t,s)g(y(s))\|_{L^{\infty}(0,\tau_1;H^1)}ds \right|\lesssim\tau_1\|g(y)\|_{L^{\infty}(0,\tau_1;H^1)}\lesssim\tau_1M_1^{\alpha}.
\end{align*}
Since we see
\begin{align}
\label{yLip}
\|y(t)-y(s)\|_{L^2}&=\left\| \int_s^t(\partial_ty)(\tau)d\tau \right\|_{L^2} \le \left| \int_s^t\|(\partial_ty)(\tau)\|_{L^2}d\tau \right| \nonumber \\
&\le |t-s|\|\partial_ty\|_{L^{\infty}(0,\tau_1;L^2)} \le M_1|t-s|,
\end{align}
$y$ is Lipschitz continuous. Also, by the interpolation inequality with
\[ \frac{1}{2\alpha}=\frac{\theta}{2}+(1-\theta)\left( \frac{1}{2}-\frac{2}{d}\right), \]
we have
\begin{align}
\label{yHol}
\|y(t)-y(s)\|_{L^{2\alpha}} &\le \|y(t)-y(s)\|^{\theta}_{L^2}\|y(t)-y(s)\|^{1-\theta}_{H^2} \nonumber \\
&\lesssim M_1^{\theta}|t-s|^{\theta}M_1^{1-\theta}=M_1|t-s|^{\theta}.
\end{align}
Similarly, by the interpolation inequality with
\[ \frac{1}{2\alpha}=\frac{\theta}{2}+(1-\theta)\left( \frac{1}{2}-\frac{1}{d}\right), \]
we have
\begin{align*}
\|\nabla(y(t)-y(s))\|_{L^{2\alpha}} &\le \|\nabla(y(t)-y(s))\|^{\theta}_{L^2}\|\nabla(y(t)-y(s))\|^{1-\theta}_{H^1} \nonumber \\
&\lesssim \|y(t)-y(s)\|^{\theta'}_{L^2}\|y(t)-y(s)\|^{1-\theta'}_{H^2} \nonumber \\
&\lesssim M_1^{\theta'}|t-s|^{\theta'}M_1^{1-\theta'}=M_1|t-s|^{\theta'}, \quad \theta'\le\theta,
\end{align*}
where we use Sobolev's embedding in $1<\alpha\le 1+\frac{2}{(d-2)^+}$.
So, $y$ is H$\ddot{\text{o}}$lder continuous. Therefore,
\begin{align*}
\|g(x)-g(y)\|_{L^{\infty}(0,\tau_1;H^1)} &\lesssim (\|x\|^{\alpha-1}_{L^{2\alpha}}+\|y\|^{\alpha-1}_{L^{\infty}(0,\tau_1;L^{2\alpha})})\|x-y\|_{L^{\infty}(0,\tau_1;W^{1,2\alpha})} \\
&\lesssim (1+\|x\|^{\alpha-1}_{H^2}+M_1^{\alpha-1})M_1\tau_1^{\theta}.
\end{align*}
In the end, the following holds,
\[ \|\partial_tF(y)\|_{str(\tau_1)} \lesssim \|x\|_{H^2}+\|x\|^{\alpha}_{H^2}+\tau_1^{\theta}M_1^{\alpha}+\tau_1M_1^{\alpha}+(1+\|x\|^{\alpha-1}_{H^2}+M_1^{\alpha-1})M_1\tau_1^{\theta}. \]
Similarly,
\begin{align*}
\|F(y)\|_{str(\tau_1)} \lesssim & \|x\|_{H^2}+\|g(y)\|_{L^{q'}(0,\tau_1;L^{p'})}\lesssim \|x\|_{H^2}+\tau_1^{\theta}M_1^{\alpha}.
\end{align*}
Therefore,
\[ \|F(y)\|_{str(\tau_1)}+\|\partial_tF(y)\|_{str(\tau_1)} \lesssim \|x\|_{H^2}+\|x\|^{\alpha}_{H^2}+\tau_1^{\theta}M_1^{\alpha}+\tau_1M_1^{\alpha}+(1+\|x\|^{\alpha-1}_{H^2}+M_1^{\alpha-1})M_1\tau_1^{\theta}. \]
Next, we estimate $\|\Delta F(y)\|_{L^{\infty}(0,\tau_1;L^2)}$. 
From (\ref{partialt1}) and (\ref{partialt2}), we have
\begin{align*}
&\int_0^tA(t)U(t,s)g(y(s))ds+g(y(t)) \\ 
&=\int_0^t\{A(t)U(t,s)-U(t,s)A(s)\}g(y(s))ds+\int_0^tU(t,s)(\partial_tg)(y(s))ds+U(t,0)g(x),
\end{align*}
then
\begin{eqnarray*}
\int_0^tU(t,s)A(s)g(y(s))ds=U(t,0)g(y(0))-g(y(t))+\int_0^tU(t,s)(\partial_tg)(y(s))ds.
\end{eqnarray*}
By
\begin{align*}
U(t,s)A(s)g(y(s))&=-iU(t,s)(\Delta +b(s,\xi)\cdot \nabla +c(s,\xi))g(y(s)) \\
&=-iU(t,s)(b(s,\xi)\cdot \nabla +c(s,\xi))g(y(s))-i\Delta U(t,s)g(y(s)) \\
&\quad +i\int_s^tU(t,\tau)[2i(D_j\nabla\widetilde{b}^j(\tau)+\nabla\widetilde{b}^j(\tau)D_j+\nabla\widetilde{c}(\tau))\nabla (U(\tau,s)g(y(s))) \\
& \qquad \qquad \qquad \qquad  +i(D_j\Delta\widetilde{b}^j(\tau)+\Delta\widetilde{b}^j(\tau)D_j+\Delta\widetilde{c}(\tau))U(\tau,s)g(y(s))]d\tau,
\end{align*}
we have 
\begin{align*}
\Delta \int_0^tU(t,s)g(y(s))ds&=iU(t,0)g(y(0))-ig(y(t))+i\int_0^tU(t,s)(\partial_tg)(y(s))ds \\
& \quad -\int_0^tU(t,s)b(s,\xi)\cdot \nabla g(y(s))ds-\int_0^tU(t,s)c(s,\xi)g(y(s))ds \\
& \quad -\int_0^t\int_s^tU(t,\tau)[2i(D_j\nabla\widetilde{b}^j(\tau)+\nabla\widetilde{b}^j(\tau)D_j+\nabla\widetilde{c}(\tau))\nabla (U(\tau,s)g(y(s))) \\
& \qquad \qquad \qquad \qquad  +i(D_j\Delta\widetilde{b}^j(\tau)+\Delta\widetilde{b}^j(\tau)D_j+\Delta\widetilde{c}(\tau))U(\tau,s)g(y(s))]d\tau ds.
\end{align*}
Therefore,
\begin{align}
\label{DF}
& \|\Delta F(y)\|_{L^{\infty}(0,\tau_1;L^2)} \nonumber \\
&\lesssim \|x\|_{H^2}+\|U(t,0)g(y(0))-g(y(t))\|_{L^{\infty}(0,\tau_1;L^2)}+\left\|\int_0^tU(t,s)(\partial_tg)(y)ds\right\|_{L^{\infty}(0,\tau_1;L^2)} \nonumber \\ 
& \quad +\left\| \int_0^tU(t,s)b(s,\xi)\cdot \nabla g(y)ds\right\|_{L^{\infty}(0,\tau_1;L^2)}+\left\| \int_0^tU(t,s)c(s,\xi)g(y)ds\right\|_{L^{\infty}(0,\tau_1;L^2)} \nonumber \\
& \quad +\left\| \int_0^t\int_s^tU(t,\tau)[2i(D_j\nabla\widetilde{b}^j(\tau)+\nabla\widetilde{b}^j(\tau)D_j+\nabla\widetilde{c}(\tau))\nabla (U(\tau,s)g(y(s))) \right. \nonumber \\
& \qquad \qquad \qquad \qquad \left. +i(D_j\Delta\widetilde{b}^j(\tau)+\Delta\widetilde{b}^j(\tau)D_j+\Delta\widetilde{c}(\tau))U(\tau,s)g(y(s))]d\tau ds \right\|_{L^{\infty}(0,\tau_1;L^2)} \nonumber \\
&\lesssim \|x\|_{H^2}+\|U(t,0)g(y(0))-g(y(t))\|_{L^{\infty}(0,\tau_1;L^2)}+\left\|\int_0^tU(t,s)(\partial_tg)(y)ds\right\|_{L^{\infty}(0,\tau_1;L^2)} \nonumber \\
& \quad +\|b\|_{L^{\infty}(0,\tau_1;L^{\infty})}\|\nabla g(y)\|_{L^{q'}(0,\tau_1;L^{p'})}+\|c\|_{L^{\infty}(0,\tau_1;L^{\infty})}\|g(y)\|_{L^{q'}(0,\tau_1;L^{p'})} \nonumber \\
& \quad +\left| \int_0^t\left\| \int_s^tU(t,\tau)[2i(D_j\nabla\widetilde{b}^j(\tau)+\nabla\widetilde{b}^j(\tau)D_j+\nabla\widetilde{c}(\tau))\nabla (U(\tau,s)g(y(s))) \right.\right. \nonumber \\
& \qquad \qquad \qquad \qquad \left.\left. +i(D_j\Delta\widetilde{b}^j(\tau)+\Delta\widetilde{b}^j(\tau)D_j+\Delta\widetilde{c}(\tau))U(\tau,s)g(y(s))]d\tau\right\|_{L^{\infty}(0,\tau_1;L^2)} ds \right| \nonumber \\
&\lesssim \|x\|_{H^2}+\|U(t,0)g(y(0))-g(y(t))\|_{L^{\infty}(0,\tau_1;L^2)}+\left\|\int_0^tU(t,s)(\partial_tg)(y)ds\right\|_{L^{\infty}(0,\tau_1;L^2)} \nonumber \\
& \quad +\|\nabla g(y)\|_{L^{q'}(0,\tau_1;L^{p'})}+\|g(y)\|_{L^{q'}(0,\tau_1;L^{p'})}+\tau_1\|g(y)\|_{L^{\infty}(0,\tau_1;H^1)}.
\end{align}
Therefore, we have the following
\begin{align*}
\left\|\int_0^tU(t,s)\partial_tg(y)ds\right\|_{L^{\infty}(0,\tau_1;L^2)} \lesssim \|\partial_tg(y)\|_{L^{q'}(0,\tau_1;L^{p'})}\lesssim \tau^{\theta}_1M_1^{\alpha}.
\end{align*}
Next, we estimate the second terms of the last line of (\ref{DF}).
\begin{align}
\label{2kou}
& \|U(t,0)g(y(0))-g(y(t))\|_{L^{\infty}(0,\tau_1;L^2)} \nonumber \\
& \le \|U(t,0)g(x)-g(x)\|_{L^{\infty}(0,\tau_1;L^2)}+\|g(x)-g(y(t))\|_{L^{\infty}(0,\tau_1;L^2)}.
\end{align}
First, we estimate the first terms in the last line of (\ref{2kou}). From (\ref{gx}), we have \\ $g(x)\in L^{\infty}(0,\tau_1;L^2)$. We estimate
\begin{align*}
\|U(t,0)g(x)-g(x)\|_{L^{\infty}(0,\tau_1;L^2)} &=\|(U(t,0)-I)g(x)\|_{L^{\infty}(0,\tau_1;L^2)} \\
& \le \|U(t,0)-I\|\|g(x)\|_{L^2} \\
& \lesssim \|g(x)\|_{L^2} \lesssim \|x\|_{H^2}^{\alpha}.
\end{align*}
Next, we estimate the second terms in the last line of (\ref{2kou}). By (\ref{yLip}) and (\ref{yHol}), we have
\begin{align*}
\|g(x)-g(y(t))\|_{L^{\infty}(0,\tau_1;L^2)} &\lesssim (\|x\|^{\alpha-1}_{L^{2\alpha}}+\|y\|^{\alpha-1}_{L^{\infty}(0,\tau_1;L^{2\alpha})})\|x-y\|_{L^{\infty}(0,\tau_1;L^{2\alpha})} \\
&\lesssim (1+\|x\|^{\alpha-1}_{H^2}+M_1^{\alpha-1})M_1\tau_1^{\theta}.
\end{align*}
So,
\[ \|\Delta F(y)\|_{L^{\infty}(0,\tau_1;L^2)} \lesssim \|x\|_{H^2}+\|x\|_{H^2}^{\alpha}+\tau_1^{\theta}M_1^{\alpha}+\tau_1M_1^{\alpha}+(1+\|x\|^{\alpha-1}_{H^2}+M_1^{\alpha-1})M_1\tau_1^{\theta}. \]
Therefore,
\begin{align*}
& \|F(y)\|_{str(\tau_1)}+\|\partial_tF(y)\|_{str(\tau_1)}+\|\Delta F(y)\|_{L^{\infty}(0,\tau_1;L^2)} \\
& \le C_{\tau_1}(\|x\|_{H^2}+\|x\|^{\alpha}_{H^2}+\tau_1^{\theta}M_1^{\alpha}+\tau_1M_1^{\alpha}+(1+\|x\|^{\alpha-1}_{H^2}+M_1^{\alpha-1})M_1\tau_1^{\theta}),
\end{align*}
where $C_{\tau_1}$ is a constant that depends on $\tau_1$. 

We shall choose $M_1$ and $\tau_1$ to obtain $F(\mathcal{Y}^{\tau_1}_{M_1})\subset \mathcal{Y}^{\tau_1}_{M_1}$ such that
\[ C_{\tau_1}(\|x\|_{H^2}+\|x\|^{\alpha}_{H^2}+\tau_1^{\theta}M_1^{\alpha}+\tau_1M_1^{\alpha}+(1+\|x\|^{\alpha-1}_{H^2}+M_1^{\alpha-1})M_1\tau_1^{\theta}) \le M_1. \]
To this end, we define the real-valued continuous, $(\mathcal{F}_t)$-adapted process 
\[ Z_t^{(1)}=2^{\alpha-1}C_t^{\alpha}(\|x\|_{H^2}+\|x\|_{H^2}^{\alpha})^{\alpha-1}(2t^{\theta}+t)+C_tt^{\theta}(1+\|x\|^{\alpha-1}_{H^2}), \ t\in [0,T], \]
choose the $(\mathcal{F}_t)$-stopping time as
\[ \tau_1=\inf \left\{ t\in[0,T] ;Z_t^{(1)}>\frac{1}{2} \right\} \wedge T, \]
and set $M_1=2C_{\tau_1}(\|x\|_{H^2}+\|x\|_{H^2}^{\alpha})$. Then it follows that $Z_{\tau_1}^{(1)}\le \frac{1}{2}$ and $F(\mathcal{Y}^{\tau_1}_{M_1})\subset \mathcal{Y}^{\tau_1}_{M_1}$. 

Moreover, since $|g(y_1)-g(y_2)|\le \alpha(|y_1|^{\alpha-1}+|y_2|^{\alpha-1})|y_1-y_2|$, for $y_1,y_2\in \mathcal{Y}^{\tau_1}_{M_1}$
\begin{align}
\label{gesti}
\|F(y_1)-F(y_2)\|_{str(\tau_1)} &\le C_{\tau_1}\|g(y_1)-g(y_2)\|_{L^{q'}(0,\tau_1;L^{p'})} \nonumber \\
&\le C_{\tau_1}(\|y_1\|^{\alpha-1}_{L^{\infty}(0,\tau_1;L^{2\alpha})}+\|y_2\|^{\alpha-1}_{L^{\infty}(0,\tau_1;L^{2\alpha})})\|y_1-y_2\|_{L^{q'}(0,\tau_1;L^p)} \\
&\le C_{\tau_1}\tau_1^{\theta}M_1^{\alpha-1}\|y_1-y_2\|_{L^q(0,\tau_1;L^p)} \nonumber \\
&\le \frac{1}{2}\|y_1-y_2\|_{L^q(0,\tau_1;L^p)}, \nonumber 
\end{align}
which implies that $F$ is a contraction in $L^{q}(0,\tau_1;L^p)$. 
Since $\mathcal{Y}^{\tau_1}_{M_1}$ is a complete metric subspace in $L^{\infty}(0,\tau_1;L^2)$, Banach's fixed point theorem yields a unique $y\in \mathcal{Y}^{\tau_1}_{M_1}$ with $y=F(y)$ on $[0,\tau_1]$. 
Consequently, setting $y_1(t):=y(t\wedge \tau_1), \ t\in [0,T]$, we deduce that $(y_1,\tau_1)$ is a local solution of (\ref{RSNLS}), such that $y_1(t)=y_1(t\wedge \tau_1), \ t\in [0,T],$ and $y_1|_{[0.\tau_1]}\in C([0,\tau_1];H^2).$

\textbf{Step 2.} We use an induction argument. Suppose that at the $n$th step we have a local solution $(y_n,\tau_n)$ of (\ref{RSNLS}), such that $\tau_n\ge \tau_{n-1}, \ y_n(t)=y_n(t\wedge \tau_n), \ t\in[0,T],$ and $y_n|_{[0,\tau_n]}\in C([0,\tau_n];H^2).$ We construct $(y_{n+1},\tau_{n+1})$. Set
\begin{eqnarray*}
\mathcal{Y}^{\sigma_n}_{M_{n+1}}=\{ z\in L^{\infty}(0,\sigma_n;L^2);\|z\|_{str(\sigma_n)}+\|\partial_tz\|_{str(\sigma_n)}+\|\Delta z\|_{L^{\infty}(0,\sigma_n;L^2)}\le M_{n+1} \},
\end{eqnarray*}
and define the map $F_n$ by
\begin{align}
F_n(z)(t)=U(\tau_n+t,\tau_n)y_n(\tau_n)-\lambda i\int_0^tU(\tau_n+t,\tau_n+s)g(z(s))ds.
\end{align}
Analogous calculations as in Step 1 show that for $z\in \mathcal{Y}^{\sigma_n}_{M_{n+1}}$
\begin{align*}
& \|F_n(z)\|_{str(\sigma_n)}+\|\partial_tF_n(z)\|_{str(\sigma_n)}+\|\Delta F_n(z)\|_{L^{\infty}(0,\sigma_n;L^2)} \\
&\le C_{\tau_n+\sigma_n}(\|y_n(\tau_n)\|_{H^2}+\|y_n(\tau_n)\|^{\alpha}_{H^2}+\sigma_n^{\theta}M_{n+1}^{\alpha}+\sigma_nM_{n+1}^{\alpha}+(1+\|y_n(\tau_n)\|^{\alpha-1}_{H^2}+M_{n+1}^{\alpha-1})M_{n+1}\sigma_n^{\theta}).
\end{align*}
We shall choose $M_{n+1}$ and $\sigma_n$ to obtain $F_n(\mathcal{Y}_{M_{n+1}}^{\sigma_n})\subset \mathcal{Y}_{M_{n+1}}^{\sigma_n}$ such that
\[ C_{\tau_n+\sigma_n}(\|y_n(\tau_n)\|_{H^2}+\|y_n(\tau_n)\|^{\alpha}_{H^2}+\sigma_n^{\theta}M_{n+1}^{\alpha}+\sigma_nM_{n+1}^{\alpha}+(1+\|y_n(\tau_n)\|^{\alpha-1}_{H^2}+M_{n+1}^{\alpha-1})M_{n+1}\sigma_n^{\theta}) \le M_{n+1}. \]
To this end, we define the real-valued continuous, $(\mathcal{F}_{\tau_n+t})$-adapted process 
\[ Z_t^{(n+1)}=2^{\alpha-1}C_{\tau_n+t}^{\alpha}(\|y_n(\tau_n)\|_{H^2}+\|y_n(\tau_n)\|_{H^2}^{\alpha})^{\alpha-1}(2t^{\theta}+t)+C_{\tau_n+t}t^{\theta}(1+\|y_n(\tau_n)\|^{\alpha-1}_{H^2}), \ t\in [0,T], \]
choose the $(\mathcal{F}_{\tau_n+t})$-stopping time as
\[ \sigma_n=\inf \left\{ t\in[0,T-\tau_n] ;Z_t^{(n+1)}>\frac{1}{2} \right\} \wedge (T-\tau_n), \]
and set $M_{n+1}=2C_{\tau_n+\sigma_n}(\|y_n(\tau_n)\|_{H^2}+\|y_n(\tau_n)\|_{H^2}^{\alpha})$. Then it follows that $Z_{\sigma_n}^{(n+1)}\le \frac{1}{2}$ and $F_n(\mathcal{Y}^{\sigma_n}_{M_{n+1}})\subset \mathcal{Y}^{\sigma_n}_{M_{n+1}}$. 

Moreover, since $|g(z_1)-g(z_2)|\le \alpha(|z_1|^{\alpha-1}+|z_2|^{\alpha-1})|z_1-z_2|$, for $z_1,z_2 \in \mathcal{Y}^{\sigma_n}_{M_{n+1}}$ 
\begin{align*}
\|F_n(z_1)-F_n(z_2)\|_{str(\sigma_n)} &\le C_{\tau_n+\sigma_n}\|g(z_1)-g(z_2)\|_{L^{q'}(0,\sigma_n;L^{p'})} \\
& \le C_{\tau_n+\sigma_n}(\|z_1\|^{\alpha-1}_{L^{\infty}(0,\sigma_n;L^{2\alpha})}+\|z_2\|^{\alpha-1}_{L^{\infty}(0,\sigma_n;L^{2\alpha})})\|z_1-z_2\|_{L^{q'}(0,\sigma_n;L^p)} \\
& \le C_{\tau_n+\sigma_n}\sigma_n^{\theta}M_{n+1}^{\alpha-1}\|z_1-z_2\|_{L^q(0,\sigma_n;L^p)} \\
& \le \frac{1}{2}\|z_1-z_2\|_{L^q(0,\sigma_n;L^p)},
\end{align*}
which implies that $F_n$ is a contraction from $L^{\infty}(0,\sigma_n;L^2)$ to the same space. 

Set $\tau_{n+1}=\tau_n+\sigma_n$. Then, similarly to the proof of Lemma 4.2 in \cite{BRZ14}, we can show $\tau_{n+1}$ is an $\Fi$-stopping time. 
By Banach's fixed point theorem, there exists a unique $z_{n+1}\in \mathcal{Y}^{\sigma_n}_{M_{n+1}}$ satisfying $z_{n+1}=F_n(z_{n+1})$ on $[0,\sigma_n]$. We define 
\begin{eqnarray*}
y_{n+1}(t)=
\begin{cases}
y_n(t), & t\in [0,\tau_n]; \\
z_{n+1}((t-\tau_n)\wedge \sigma_n), & t\in (\tau_n,T].
\end{cases}
\end{eqnarray*}
It follows from the definition of $F$ and $F_n$ that $y_{n+1}=F(y_{n+1})$ on $[0,\tau_{n+1}]$. 

And, similar to the proof of Lemma 6.2 in \cite{BRZ14}, $y_{n+1}$ is an adapted to $\Fi$ in $H^2$. 
Hence, $(y_{n+1},\tau_{n+1})$ is a local solution of (\ref{RSNLS}), such that $y_{n+1}(t)=y_{n+1}(t\wedge \tau_{n+1}), \ t\in[0,T]$, and $y_{n+1}|_{[0,\tau_{n+1}]}\in C([0,\tau_{n+1}];H^2)$. 
Starting from Step 1 and repeating the procedure in Step 2, we finally construct a sequence of local solutions $(y_n,\tau_n), \ n\in \mathbb{N},$ where $\tau_n$ are increasing stopping times and $y_{n+1}=y_n, \ \text{on} \ [0,\tau_n]$. 

To prove the uniqueness, for any two local solutions $(\widetilde{y}_i,\sigma_i), \ i=1,2,$ define $\iota=\sup \{ t\in[0,\sigma_1\wedge \sigma_2]:\widetilde{y}_1=\widetilde{y}_2 \ \text{on} \ [0,t] \}$. Suppose that $\Pro (\iota <\sigma_1\wedge \sigma_2)>0$. For $\omega \in \{ \iota<\sigma_1\wedge \sigma_2 \},$ we have $\widetilde{y}_1(\omega)=\widetilde{y}_2(\omega)$ on $[0,\iota(\omega)]$ by the continuity in $H^2$, and for $t\in [0,\sigma_1\wedge \sigma_2(\omega)-\iota(\omega))$ 
\begin{align*}
& \|\widetilde{y}_1(\omega)-\widetilde{y}_2(\omega)\|_{L^{\infty}(\iota(\omega),\iota(\omega)+t;L^2)}+\|\widetilde{y}_1(\omega)-\widetilde{y}_2(\omega)\|_{L^q(\iota(\omega),\iota(\omega)+t;L^p)} \\
& =\|F(\widetilde{y}_1(\omega))-F(\widetilde{y}_2(\omega))\|_{L^{\infty}(\iota(\omega),\iota(\omega)+t;L^2)}+\|F(\widetilde{y}_1(\omega))-F(\widetilde{y}_2(\omega))\|_{L^q(\iota(\omega),\iota(\omega)+t;L^p)} \\
& \le C_{\iota(\omega)+t}\|g(\widetilde{y}_1(\omega))-g(\widetilde{y}_2(\omega))\|_{L^{q'}(\iota(\omega),\iota(\omega)+t;L^{p'})} \\
& \le C_{\iota(\omega)+t}(\|\widetilde{y}_1(\omega)\|^{\alpha-1}_{L^{\beta}(\iota(\omega),\iota(\omega)+t;L^{2\alpha})}+\|\widetilde{y}_2(\omega)\|^{\alpha-1}_{L^{\beta}(\iota(\omega),\iota(\omega)+t;L^{2\alpha})})\|\widetilde{y}_1(\omega)-\widetilde{y}_2(\omega)\|_{L^{q'+\varepsilon}(\iota(\omega),\iota(\omega)+t;L^p)} \\
& \le C_{\iota(\omega)+t}\widetilde{M}(t)t^{\theta}\|\widetilde{y}_1(\omega)-\widetilde{y}_2(\omega)\|_{L^q(\iota(\omega),\iota(\omega)+t;L^p)},
\end{align*}
where $\varepsilon>0$, $\beta=\frac{q'(q'+\varepsilon)(\alpha-1)}{\varepsilon}<\infty$, $\widetilde{M}(t):=\|\widetilde{y}_1(\omega)\|^{\alpha-1}_{L^{\beta}(\iota(\omega),\iota(\omega)+t;H^2)}+\|\widetilde{y}_2(\omega)\|^{\alpha-1}_{L^{\beta}(\iota(\omega),\iota(\omega)+t;H^2)} \to 0$ as $t\to 0$. Therefore, with $t$ small enough we deduce that $\widetilde{y}_1(\omega)=\widetilde{y}_2(\omega)$ on $[\iota(\omega),\iota(\omega)+t]$, hence $\widetilde{y}_1(\omega)=\widetilde{y}_2(\omega)$ on $[0,\iota(\omega)+t]$, which contradicts the definition of $\iota$.

Finally, we prove the continuous dependence of the initial data. Suppose that $x_m\to x$ in $H^2$ and let $(y_m,(\tau_n^m)_{n\in \mathbb{N}},\tau^*(x_m))$ be the unique local solutions of (\ref{RSNLS}) corresponding to the initial data $x_m, \ m\ge 1.$ Since $\|x_m\|_{H^2}\le \|x\|_{H^2}+1$ for $m\ge m_1$ with $m_1$ large enough, we modify $\tau_1(\le T)$ in the proof of Proposition \ref{ymain} by 
\begin{align*}
\tau_1=\inf \{ t\in [0,T]:2^{\alpha-1}C_t^{\alpha}((\|x\|_{H^2}+1)&+(\|x\|_{H^2}+1)^{\alpha})^{\alpha-1}(2t^{\theta}+t)  \\
& \left. +C_tt^{\theta}(\|x\|_{H^2}+2)^{\alpha-1}>\frac{1}{2} \right\} \wedge T,
\end{align*}
such that $\tau_1$ is independent for $m\ge m_1$. Hence,
\[ \widetilde{R}:=\sup_{m\ge m_1}\|y_m\|_{L^{\infty}(0,\tau_1;H^2)}<\infty, \quad \Pas. \]

We first prove the continuous dependence on initial data on the interval $[0,\tau_1]$. 
\begin{enumerate}
\renewcommand{\labelenumi}{(\roman{enumi})}
\item Claim 1: $\|y_m-y\|_{L^{\infty}(0,\tau_1;L^2)}\to 0, \ \text{as} \ m\to \infty.$

From
\begin{eqnarray*}
y_m-y=U(t,0)(x_m-x)-\lambda i\int_0^tU(t,s)(g(y_m(s))-g(y(s)))ds,
\end{eqnarray*}
taking $t$ small and independently of $m(\ge m_1)$, we have 
\begin{align*}
\|y_m-y\|_{L^{\infty}(0,t;L^2)} & \lesssim \|x_m-x\|_{L^2}+\|g(y_m)-g(y)\|_{L^{q'}(0,t;L^{p'})} \\
& \lesssim \|x_m-x\|_{L^2}+(\|y_m\|^{\alpha-1}_{L^{\infty}(0,t;L^{2\alpha})}+\|y\|^{\alpha-1}_{L^{\infty}(0,t;L^{2\alpha})})\|y_m-y\|_{L^{q'}(0,t;L^p)} \\
& \lesssim \|x_m-x\|_{L^2}+t^{\theta}\widetilde{R}^{\alpha-1}\|y_m-y\|_{L^q(0,t;L^p)}.
\end{align*}
Therefore,
\[ \|y_m-y\|_{L^{\infty}(0,t;L^2)} \le C_{\tau_1}(\|x_m-x\|_{L^2}+t^{\theta}\widetilde{R}^{\alpha-1}\|y_m-y\|_{L^q(0,t;L^p)}). \]
So, if we choose $t$ satisfy $C_{\tau_1}t^{\theta}\widetilde{R}^{\alpha-1} \le \frac{1}{2}$, we obtain
\[ \|y_m-y\|_{L^{\infty}(0,t;L^2)} \le 2C_{\tau_1}\|x_m-x\|_{L^2}. \]
Since $t$ is independent of $m(\ge m_1)$,
\[ \|y_m-y\|_{L^{\infty}(0,t;L^2)} \to 0, \ \text{as} \ m\to \infty. \]
By repeating this a finite number of times, we can show Claim 1.\\
\item Claim 2: $\|y_m-y\|_{L^{\infty}(0,\tau_1;H^s)}\to 0, \ \text{as} \ m\to \infty, \ 0<s<2.$

From the interpolation inequality,
\begin{align*}
\|y_m-y\|_{L^{\infty}(0,\tau_1;H^s)} & \le \|y_m-y\|_{L^{\infty}(0,\tau_1;L^2)}^{\frac{2-s}{2}}\|y_m-y\|_{L^{\infty}(0,\tau_1;H^2)}^{1-\frac{2-s}{2}} \\
& \le \|y_m-y\|_{L^{\infty}(0,\tau_1;L^2)}^{\frac{2-s}{2}}(\|y_m\|_{L^{\infty}(0,\tau_1;H^2)}+\|y\|_{L^{\infty}(0,\tau_1;H^2)})^{1-\frac{2-s}{2}} \\
& \le \widetilde{R}^{1-\frac{2-s}{2}}\|y_m-y\|_{L^{\infty}(0,\tau_1;L^2)}^{\frac{2-s}{2}}.
\end{align*}
Therefore, if we take $m\to \infty$, we can show Claim 2 from Claim 1. 
\end{enumerate}
Now, since $y_m(\tau_1)\to y(\tau_1) \ \text{in} \ H^2$, similarly we can get the above results on $[\tau_1,\tau_2]$ with $\tau_2$ depending on $\|y(\tau_1)\|_{H^2}$. Therefore, we can show the continuous dependence on $[0,\tau_2]$. Reiterating this procedure, we then obtain increasing stopping times $\tau_n$, depending on $\|y(\tau_{n-1})\|_{H^2}$, such that continuous dependence holds on every $[0,\tau_n]$. Therefore, for $n\ge1, \ \Pas \ \omega \in \Omega$ and $0\le s<2$, the map $x\to y(\cdot,x,\omega)$ is continuous from $H^2$ to $L^{\infty}(0,\tau_n;H^s)$. 
\end{proof}

\begin{proof}[Proof of Theorem \ref{ycor}]
First, for any Strichartz pair $(\rho,\gamma)$, we show $y_n|_{[0,\tau_n]}\in L^{\gamma}(0,\tau_n;W^{2,\rho})$. We consider maps
\[ y(t)=U(t,0)x-\lambda i\int_0^tU(t,s)g(y(s))ds, \]
and 
\begin{align}
\label{Fy2}
F(y)(t)=U(t,0)x-\lambda i\int_0^tU(t,s)g(y(s))ds, \ t\in [0,T].
\end{align}
Let us first consider the case $d=5,6,7$. Choose the Strichartz pair \\ $(p,q)=(\frac{d(\alpha+1)}{d+2\alpha -2},\frac{4(\alpha+1)}{(d-4)(\alpha-1)})$. By Strichartz estimates in Lemma \ref{H2SE},
\begin{align}
\label{eq1}
\|F(y)\|_{L^q(0,T;W^{2,p})}\lesssim \|x\|_{H^2}+\|g(y)\|_{L^{q'}(0,T;W^{2,p'})}. 
\end{align}
Also, we have that
\begin{align}
\label{eq2}
& \|g(y)\|_{L^{q'}(0,T;W^{2,p'})} \nonumber \\
& \lesssim \||y|^{\alpha-1}y\|_{L^{q'}(0,T;L^{p'})}+\||y|^{\alpha-1}|\nabla y|\|_{L^{q'}(0,T;L^{p'})}+\||y|^{\alpha-2}|\nabla y|^2\|_{L^{q'}(0,T;L^{p'})} \nonumber \\
& \quad +\||y|^{\alpha-1}|\Delta y|\|_{L^{q'}(0,T;L^{p'})}.
\end{align}
From H$\ddot{\text{o}}$lder's inequality and the Sobolev embedding it follows that
\begin{align}
\label{eq3}
\||y|^{\alpha-1}y\|_{L^{q'}(0,T;L^{p'})} & \le T^{\theta}\|y\|^{\alpha-1}_{L^q(0,T;L^{\frac{p(\alpha-1)}{p-2}})}\|y\|_{L^q(0,T;L^p)} \nonumber \\
& \lesssim T^{\theta}\|y\|^{\alpha-1}_{L^q(0,T;W^{2,p})}\|y\|_{L^q(0,T;W^{2,p})}, \\
\label{eq4}
\||y|^{\alpha-1}|\nabla y|\|_{L^{q'}(0,T;L^{p'})} & \le T^{\theta}\|y\|^{\alpha-1}_{L^q(0,T;L^{\frac{p(\alpha-1)}{p-2}})}\|\nabla y\|_{L^q(0,T;L^p)} \nonumber \\
& \lesssim T^{\theta}\|y\|^{\alpha-1}_{L^q(0,T;W^{2,p})}\|y\|_{L^q(0,T;W^{2,p})}, \\
\label{eq5}
\||y|^{\alpha-2}|\nabla y|^2\|_{L^{q'}(0,T;L^{p'})} & \le T^{\theta}\|y\|^{\alpha-2}_{L^q(0,T;L^{\frac{d(\alpha +1)}{d-4}})}\|\nabla y\|^2_{L^q(0,T;L^{\frac{d(\alpha+1)}{d+\alpha -3}})} \nonumber \\
& \lesssim T^{\theta}\|y\|^{\alpha-2}_{L^q(0,T;W^{2,p})}\|y\|^2_{L^q(0,T;W^{2,p})}, \\
\label{eq6}
\||y|^{\alpha-1}|\Delta y|\|_{L^{q'}(0,T;L^{p'})} & \le T^{\theta}\|y\|^{\alpha-1}_{L^q(0,T;L^{\frac{p(\alpha-1)}{p-2}})}\|\Delta y\|_{L^q(0,T;L^p)} \nonumber \\
& \lesssim T^{\theta}\|y\|^{\alpha-1}_{L^q(0,T;W^{2,p})}\|y\|_{L^q(0,T;W^{2,p})}.
\end{align}
Thus, inserting (\ref{eq3}),(\ref{eq4}),(\ref{eq5}),(\ref{eq6}) into (\ref{eq2}) and (\ref{eq1}) yields that 
\begin{align}
\label{W2p}
\|F(y)\|_{L^q(0,T;W^{2,p})}\lesssim \|x\|_{H^2}+T^{\theta}\|y\|^{\alpha}_{L^q(0,T;W^{2,p})}.
\end{align}

Fix $\omega \in \Omega$ and consider $F$ on the set
\[ \mathcal{Y}^{\tau_1}_{M_1}=\left\{ y\in L^{\infty}(0,\tau_1;H^2)\cap L^q(0,\tau_1;W^{2,p});\|y\|_{L^{\infty}(0,\tau_1;H^2)}+\|y\|_{L^q(0,\tau_1;W^{2,p})}\le M_1 \right\}. \]
Then, suggested to the proof of Theorem \ref{ymain}. We can conclude $F$ is a contraction from $L^{\infty}(0,\tau_1;L^2)\cap L^q(0,\tau_1;L^p)$ to the same space, which yields a unique $y\in \mathcal{Y}^{\tau_1}_{M_1}$ with $y=F(y)$ on $[0,\tau_1]$. We also have $y_1|_{[0,\tau_1]}\in L^{\gamma}(0,\tau_1;W^{2,\rho})$ by using Strichartz estimate (\ref{SEH^2}). The same is true for $n\ge 2$. Therefore, we have proved the case $d=5,6,7.$

If $d=4$, choose the Strichartz pair $(p,q)=(\frac{2(\alpha+2)}{\alpha+1},\alpha+2)$. Then H$\ddot{\text{o}}$lder's inequality and Sobolev's imbedding give
\begin{align}
\||y|^{\alpha-1}y\|_{L^{q'}(0,T;L^{p'})} & \le T^{\theta}\|y\|^{\alpha-1}_{L^q(0,T;L^{\frac{p(1-\alpha)}{2-p}})}\|y\|_{L^q(0,T;L^p)} \nonumber \\
& \lesssim T^{\theta}\|y\|^{\alpha-1}_{L^q(0,T;W^{2,p})}\|y\|_{L^q(0,T;W^{2,p})}, \\
\||y|^{\alpha-1}|\nabla y|\|_{L^{q'}(0,T;L^{p'})} & \le T^{\theta}\|y\|^{\alpha-1}_{L^q(0,T;L^{\frac{p(1-\alpha)}{2-p}})}\|\nabla y\|_{L^q(0,T;L^p)} \nonumber \\
& \lesssim T^{\theta}\|y\|^{\alpha-1}_{L^q(0,T;W^{2,p})}\|y\|_{L^q(0,T;W^{2,p})}, \\
\||y|^{\alpha-2}|\nabla y|^2\|_{L^{q'}(0,T;L^{p'})} & \le T^{\theta}\|y\|^{\alpha-2}_{L^q(0,T;L^{\infty})}\|\nabla y\|^2_{L^q(0,T;L^{\frac{4(\alpha+2)}{\alpha +3}})} \nonumber \\
& \lesssim T^{\theta}\|y\|^{\alpha-2}_{L^q(0,T;W^{2,p})}\|y\|^2_{L^q(0,T;W^{2,p})}, \\
\||y|^{\alpha-1}|\Delta y|\|_{L^{q'}(0,T;L^{p'})} & \le T^{\theta}\|y\|^{\alpha-1}_{L^q(0,T;L^{\frac{p(1-\alpha)}{2-p}})}\|\Delta y\|_{L^q(0,T;L^p)} \nonumber \\
& \lesssim T^{\theta}\|y\|^{\alpha-1}_{L^q(0,T;W^{2,p})}\|y\|_{L^q(0,T;W^{2,p})}.
\end{align}
Hence, the estimate (\ref{W2p}) is accordingly modified by
\begin{align}
\|F(y)\|_{L^q(0,T;W^{2,p})}\lesssim \|x\|_{H^2}+T^{\theta}\|y\|^{\alpha}_{L^q(0,T;W^{2,p})}.
\end{align}
Thus we can show the case $d=4$ in a similar way as for $d=5,6,7$.

If $d=1,2,3$, we can use Sobolev's embedding theorem $\|y\|_{L^{\infty}}\lesssim \|y\|_{H^2}$. Then, we have
\begin{align*}
\|F(y)\|_{L^{\infty}(0,T;H^2)} & \lesssim \|x\|_{H^2}+\|\lambda g(y)\|_{L^1(0,T;H^2)} \\
& \lesssim \|x\|_{H^2}+(\||y|^{\alpha-1}y\|_{L^1(0,T;L^2)}+\||y|^{\alpha-1}|\nabla y|\|_{L^1(0,T;L^2)} \\
& \qquad +\||y|^{\alpha-2}|\nabla y|^2\|_{L^1(0,T;L^2)}+\||y|^{\alpha-1}|\Delta y|\|_{L^1(0,T;L^2)}) \\
& \lesssim \|x\|_{H^2}+T\|y\|^{\alpha-1}_{L^{\infty}(0,T;L^{\infty})}\|y\|_{L^{\infty}(0,T;H^2)}+T\|y\|^{\alpha-2}_{L^{\infty}(0,T;L^{\infty})}\|y\|^2_{L^{\infty}(0,T;H^2)} \\
& \lesssim \|x\|_{H^2}+T\|y\|^{\alpha}_{L^{\infty}(0,T;H^2)}.
\end{align*}
Thus we can show the case $d=1,2,3$ in a similar way as for $d=5,6,7$.

Next, prove the continuous dependence of the initial data. Suppose that $x_m\to x$ in $H^2$ and let $(y_m,(\tau_n^m)_{n\in \mathbb{N}},\tau^*(x_m))$ be the unique local solutions of (\ref{RSNLS}) corresponding to the initial data $x_m, \ m\ge 1.$ Choose the Strichartz pair $(p,q)=(\frac{d(\alpha+1)}{d+2\alpha -2},\frac{4(\alpha+1)}{(d-4)(\alpha-1)}), \ \text{if} \ d=5,6,7, \ (p,q)=(\frac{2(\alpha+2)}{\alpha+1},\alpha+2), \ \text{if} \ d=4$. If $d=1,2,3$, take any Strichartz pair. Since $\|x_m\|_{H^2}\le \|x\|_{H^2}+1$ for $m\ge m_1$ with $m_1$ large enough, we modify $\tau_1(\le T)$ in the proof of Theorem \ref{ymain} by 
\[ \tau_1=\inf \left\{ t\in [0,T]:2\cdot3^{\alpha-1}C_t^{\alpha}(\|x\|_{H^2}+1)^{\alpha-1}t^{\theta}>\frac{1}{3} \right\} \wedge T, \]
such that $\tau_1$ is independent for $m\ge m_1$. Hence, 
\[ \widetilde{R}:=\sup_{m\ge m_1}\|y_m\|_{L^{\infty}(0,\tau_1;H^2)}<\infty, \quad \Pas. \]

We first prove the continuous dependence of the initial data on the interval $[0,\tau_1]$. From proof of Theorem 1.2 of \cite{BRZ16}, taking $t$ small and independent of $m(\ge m_1)$, we have
\begin{align}
\label{CDH^1}
\|y_m-y\|_{L^{\infty}(0,t;H^1)}+\|y_m-y\|_{L^q(0,t;W^{1,p})}\to 0, \ \text{as} \ m\to \infty.
\end{align}
Then, to obtain that
\begin{align}
\label{CDH^2}
\|y_m-y\|_{L^{\infty}(0,t;H^2)}+\|y_m-y\|_{L^q(0,t;W^{2,p})}\to 0,
\end{align}
we use (\ref{deltau}) to derive that for $m\ge m_1$
\begin{align}
\label{deltaym}
\Delta (y_m-y) & = U(t,0)\Delta (x_m-x)+\int_0^tU(t,s)\{ 2i(D_j\nabla \tilde{b}^j(s)+\nabla \tilde{b}^j(s)D_j+\nabla \tilde{c}(s)) \nonumber \\
& \quad \times \nabla(y_m-y)+i(D_j\Delta \tilde{b}^j(s)+\Delta \tilde{b}^j(s)D_j+\Delta \tilde{c}(s))(y_m-y) \nonumber \\
&  \quad -\lambda i\Delta (g(y_m(s))-g(y(s)))\}ds,
\end{align}
where $g(y)=|y|^{\alpha-1}y$. We note that, by Proposition 2.3(a) in \cite{MMT08}, (\ref{XEL^2}) and (\ref{XEH^1}), similar to the estimate in \eqref{gesti}, we obtain
\begin{align}
\label{eq426}
\|i(D_j\Delta \widetilde{b}^j+\Delta \widetilde{b}^jD_j+\Delta \widetilde{c})(y_m-y)\|_{\widetilde{X}'_{[0,t]}}&\lesssim \|y_m-y\|_{\widetilde{X}_{[0,t]}} \nonumber \\
&\lesssim \|x_m-x\|_{L^2}+\|g(y_m)-g(y)\|_{L^{q'}(0,t;L^{p'})} \nonumber \\
&\lesssim \|x_m-x\|_{L^2}+t^{\theta}\|y_m-y\|_{L^q(0,t;L^p)}, \\
\label{eq427}
\|2i(D_j\nabla \tilde{b}^j+\nabla \tilde{b}^jD_j+\nabla \tilde{c})\nabla (y_m-y)\|_{\tilde{X}'_{[0,t]}}&\lesssim \|\nabla(y_m-y)\|_{\widetilde{X}_{[0,t]}} \nonumber \\
&\lesssim \|x_m-x\|_{H^1}+\|g(y_m)-g(y)\|_{L^{q'}(0,t;W^{1,p'})} \nonumber \\
&\lesssim \|x_m-x\|_{H^1}+t^{\theta}\|y_m-y\|_{L^q(0,t;W^{1,p})},
\end{align}
where $\widetilde{X}_{[0,t]}$ is the local smoothing space (see Definition \ref{locsp}).  Applying (\ref{XEL^2}), (\ref{XEH^1}) to (\ref{deltaym}), we derive by (\ref{eq426}) and (\ref{eq427}), 
\begin{align}
\label{eq428}
&\|\Delta y_m-\Delta y\|_{L^{\infty}(0,t;L^2)}+\|\Delta y_m-\Delta y\|_{L^q(0,t;L^p)} \nonumber \\
& \lesssim \|\Delta x_m-\Delta x\|_{L^2}+\|2i(D_j\nabla \widetilde{b}^j+\nabla \widetilde{b}^jD_j+\nabla \widetilde{c})\nabla (y_m-y)\|_{\widetilde{X}'_{[0,t]}} \nonumber \\
& \quad +\|i(D_j\Delta \widetilde{b}^j+\Delta \widetilde{b}^jD_j+\Delta \widetilde{c})(y_m-y)\|_{\widetilde{X}'_{[0,t]}}+\|\lambda i\Delta (g(y_m)-g(y))\|_{L^{q'}(0,t;L^{p'})} \nonumber \\
& \lesssim \|x_m-x\|_{H^2}+t^{\theta}\|y_m-y\|_{L^q(0,t;W^{1,p})}+\|\Delta g(y_m)-\Delta g(y)\|_{L^{q'}(0,t;L^{p'})}.
\end{align}
As regards the last term on the right hand side of (\ref{eq428}), we note that $\nabla g(y)=F_1(y)\nabla y+F_2(y)\nabla \overline{y}$, where $F_1(y)=\frac{\alpha+1}{2}|y|^{\alpha-1}$ and $F_2(y)=\frac{\alpha-1}{2}|y|^{\alpha-3}y^2$. Then 
\begin{align*}
\Delta g(y_m)-\Delta g(y) & =\nabla F_1(y_m)\nabla y_m+F_1(y_m)\Delta y_m+\nabla F_2(y_m)\nabla \overline{y_m}+F_2(y_m)\Delta \overline{y_m} \\
&\quad -\nabla F_1(y)\nabla y-F_1(y)\Delta y-\nabla F_2(y)\nabla \overline{y}-F_2(y)\Delta \overline{y} \\
& = F_1(y_m)[\Delta y_m-\Delta y]+[F_1(y_m)-F_1(y)]\Delta y+F_2(y_m)[\Delta \overline{y_m}-\Delta \overline{y}] \\
& \quad +[F_2(y_m)-F_2(y)]\Delta \overline{y}+\nabla F_1(y_m)[\nabla y_m-\nabla y]+[\nabla F_1(y_m)-\nabla F_1(y)]\nabla y \\
& \quad +\nabla F_2(y_m)[\nabla \overline{y_m}-\nabla \overline{y}]+[\nabla F_2(y_m)-\nabla F_2(y)]\nabla \overline{y} \\
&=:\sum_{k=1}^8I_k.
\end{align*}
Since $|I_1|+|I_3| \lesssim |y_m|^{\alpha-1}|\Delta y_m-\Delta y|$, we have
\begin{align}
\label{412}
\|I_1+I_3\|_{L^{q'}(0,t;L^{p'})} &\lesssim \|y_m\|^{\alpha-1}_{L^{\infty}(0,t;L^r)}t^{\theta}\|\Delta y_m-\Delta y\|_{L^{q}(0,t;L^{p})} \nonumber \\
&\lesssim \widetilde{R}^{\alpha-1}t^{\theta}\|y_m-y\|_{L^{q}(0,t;W^{2,p})}.
\end{align}
Here we used H$\ddot{\text{o}}$lder's inequality with
\[ 1-\frac{1}{q}=\frac{\alpha-1}{\infty}+\theta+\frac{1}{q}, \quad 1-\frac{1}{p}=\frac{\alpha-1}{r}+\frac{1}{p}, \quad r=\frac{p(\alpha-1)}{p-2}. \]
Also, since $|I_5|+|I_7| \lesssim |y_m|^{\alpha-2}|\nabla y_m-\nabla y|$, we have
\begin{align}
\label{413}
\|I_5+I_7\|_{L^{q'}(0,t;L^{p'})} &\lesssim \|y_m\|^{\alpha-2}_{L^{\infty}(0,t;L^r)}t^{\theta}\|\nabla y_m-\nabla y\|_{L^{q}(0,t;L^{p})} \nonumber \\
&\lesssim \widetilde{R}^{\alpha-2}t^{\theta}\|y_m-y\|_{L^{q}(0,t;W^{2,p})}.
\end{align}
Here we used H$\ddot{\text{o}}$lder's inequality with
\[ 1-\frac{1}{q}=\frac{\alpha-2}{\infty}+\theta+\frac{1}{q}, \quad 1-\frac{1}{p}=\frac{\alpha-2}{r}+\frac{1}{p}, \quad r=\frac{p(\alpha-2)}{p-2}. \]
Therefore, by (\ref{eq428}),(\ref{412}) and (\ref{413}),
\begin{align}
\label{414}
&\|y_m-y\|_{L^{\infty}(0,t;H^2)}+\|y_m-y\|_{L^q(0,t;W^{2,p})} \nonumber \\
& \lesssim \|x_m-x\|_{H^2}+t^{\theta}\|y_m-y\|_{L^q(0,t;W^{1,p})}+\widetilde{R}^{\alpha-1}t^{\theta}\|y_m-y\|_{L^{q}(0,t;W^{2,p})} \nonumber \\
& \quad +\widetilde{R}^{\alpha-2}t^{\theta}\|y_m-y\|_{L^{q}(0,t;W^{2,p})}+\|I_2+I_4+I_6+I_8\|_{L^{q'}(0,t;L^{p'})}.
\end{align}
Also, similar to the proof of Theorem 1.2 of \cite{BRZ16}, it follows that 
\begin{align}
\label{415}
\|I_2\|_{L^{q'}(0,t;L^{p'})}+\|I_4\|_{L^{q'}(0,t;L^{p'})}+\|I_6\|_{L^{q'}(0,t;L^{p'})}+\|I_8\|_{L^{q'}(0,t;L^{p'})} \to 0, \ \text{as} \ m\to \infty.
\end{align}
Thus we obtain (\ref{CDH^2}) by (\ref{CDH^1}), (\ref{414}) and (\ref{415}).
Reiterating this procedure in finite steps we obtain (\ref{CDH^2}) on $[0,\tau_1]$. 

Now, since $y_m(\tau_1)\to y(\tau_1) \ \text{in} \ H^2$, similarly we can get the above results to $[\tau_1,\tau_2]$ with $\tau_2$ depending on $\|y(\tau_1)\|_{H^2}$. Therefore, we can show the continuous dependence on $[0,\tau_2]$. Reiterating this procedure, we then obtain increasing stopping times $\tau_n$, depending on $\|y(\tau_{n-1})\|_{H^2}$, such that continuous dependence holds on every $[0,\tau_n]$. Therefore, for $n\ge1$ and $\Pas \ \omega \in \Omega$, the map $x\to y(\cdot,x,\omega)$ is continuous from $H^2$ to $L^{\infty}(0,\tau_n;H^2)\cap L^q(0,\tau_n;W^{2,p})$. The same is true for any Strichartz pair $(\rho, \gamma)$. 

Finally, we prove the blowup alternative. Suppose that 
\[\Pro (M^*<\infty;\tau_n<\tau^*(x), \ \forall n\in \mathbb{N})>0,\] 
where
\[ M^*:=\sup_{t\in[0,\tau^*(x))}\|y(t)\|_{H^2}.\] 
Define 
\[ Z_t=2\cdot3^{\alpha-1}C_t^{\alpha}(M^*)^{\alpha-1}t^{\theta}, \ t\in [0,T], \]
\[ \sigma:=\inf \left\{ t\in [0,T]:Z_t>\frac{1}{3} \right\} \wedge T. \]
For $\omega \in \{ M^*<\infty;\tau_n<\tau^*(x), \ \forall n\in \mathbb{N} \}$, since $\tau_n(\omega)<T, \ \forall n\in \mathbb{N},$ by the definition of $\sigma_n$ in Step 2, we have 
\[ Z_t^{(n+1)}=2\cdot3^{\alpha-1}C_{\tau_n+t}^{\alpha}\|y(\tau_n)\|_{H^2}^{\alpha-1}t^{\theta}, \ t\in [0,T-\tau_n(\omega)], \]
\[ \sigma_n(\omega)=\inf \left\{ t\in[0,T-\tau_n(\omega)]: Z_t^{(n+1)}>\frac{1}{3} \right\}\wedge (T-\tau_n). \]

Notice that, for every $n\ge 1, \ \|y(\tau_n(\omega))\|_{H^2}\le M^*, \ C_{\tau_n(\omega)+t}\le C_{T+t}$. It follows that $Z_t(\omega)\ge Z_t^{(n+1)}(\omega)$, therefore $\sigma_n(\omega)>\sigma(\omega)>0.$ Hence $\tau_{n+1}(\omega)=\tau_n(\omega)+\sigma_n(\omega)>\tau_n(\omega)+\sigma(\omega)$, which implies $\tau_{n+1}(\omega)>\tau_1(\omega)+n\sigma(\omega)$ for every $n\ge 1$, contradicting the fact that $\tau_n(\omega)\le T$. This completes the proof.
\end{proof}

\section{Proof of Theorem \ref{yglob}}
\setcounter{equation}{0}

Under the assumption that the $H^1$ norm is bounded, we prove the time global well-posedness in $H^2$ for the equation with power condition $2\le \alpha \le 1+\frac{4}{(d-4)^+}$. This is so called persistence argument.
\begin{proof}[Proof of Theorem \ref{yglob}]
From Theorem \ref{ycor}, for any $x\in H^2$ and $0<T<\infty$, there exists a local solution $(y,\tau^*(x))$ where $\tau^*(x)$ is the maximum existence time of $y$. Assume that $\tau^*(x)<T$. Then, there exists $\varepsilon$ such that $\tau^*(x)-\varepsilon<\tau^*(x)+\varepsilon<T$. Set $I=[\tau^*(x)-\varepsilon,\tau^*(x)+\varepsilon]$. Fix $\omega\in\Omega$. Set $(p,q)=(\alpha+1,\frac{4(\alpha+1)}{d(\alpha-1)})$. We estimate the map in (\ref{Fy2}). From the deterministic Strichartz estimate, we have
\begin{align*}
\|F(y)\|_{L^{\infty}(I;H^2)\cap L^q(I;W^{2,p})}\le \|x\|_{H^2}+\||y|^{\alpha-1}y\|_{L^{q'}(I;W^{2,p'})}.
\end{align*}
Thus, by Sobolev's embedding theorem, we have
\begin{align*}
&\||y|^{\alpha-1}y\|_{L^{q'}(I;W^{2,p'})} \\
& \lesssim \||y|^{\alpha-1}y\|_{L^{q'}(I;L^{p'})}+\||y|^{\alpha-1}|\nabla y|\|_{L^{q'}(I;L^{p'})}+\||y|^{\alpha-2}|\nabla y|^2\|_{L^{q'}(I;L^{p'})}+\||y|^{\alpha-1}|\Delta y|\|_{L^{q'}(I;L^{p'})} \\
& \lesssim (2\varepsilon)^{\theta}\|y\|^{\alpha-1}_{L^{\infty}(I;L^p)}\|y\|_{L^q(I;L^p)}+(2\varepsilon)^{\theta}\|y\|^{\alpha-1}_{L^{\infty}(I;L^p)}\|\nabla y\|_{L^q(I;L^p)} \\
& \quad +(2\varepsilon)^{\theta}\|y\|^{\alpha-2}_{L^{\infty}(I;L^p)}\|\nabla y\|^2_{L^q(I;L^p)}+(2\varepsilon)^{\theta}\|y\|^{\alpha-1}_{L^{\infty}(I;L^p)}\|\Delta y\|_{L^q(I;L^p)} \\
& \lesssim (2\varepsilon)^{\theta}\|y\|^{\alpha-1}_{L^{\infty}(I;H^1)}\|y\|_{L^q(I;W^{2,p})}+(2\varepsilon)^{\theta}\|y\|^{\alpha-1}_{L^{\infty}(I;H^1)}\|y\|_{L^q(I;W^{2,p})} \\
& \quad +(2\varepsilon)^{\theta}\|y\|^{\alpha-2}_{L^{\infty}(I;H^1)}\|y\|^2_{L^q(I;W^{2,p})}+(2\varepsilon)^{\theta}\|y\|^{\alpha-1}_{L^{\infty}(I;H^1)}\|y\|_{L^q(I;W^{2,p})} \\
& \lesssim (2\varepsilon)^{\theta}\|y\|_{L^q(I;W^{2,p})}.
\end{align*}
In the similar way, we estimate the difference to obtain
\begin{align*}
\|F(y_1)-F(y_2)\|_{L^{\infty}(I;H^2)\cap L^q(I;W^{2,p})}\lesssim (2\varepsilon)^{\theta}C(T)\|y_1-y_2\|_{L^q(I;W^{2,p})}.
\end{align*}
Thus, if $\varepsilon$ is sufficiently small, $F$ is a contraction map. Therefore, it contradicts the definition of the maximal existence time $\tau^*(x)$.
\end{proof}

\section*{Acknowledgement}
The third author was supported by JSPS Kakenhi Grant-in-Aid for Scientific Research (C) 20K03671.

\section*{Declarations}
\subsection*{Conflicts of interests}
The authors declare that there is no conflict of interests regarding the publication of this paper.
\subsection*{Data Availability Statements}
Data sharing not applicable to this article as no datasets were generated or analysed during the current study.


\begin{thebibliography}{99}


\bibitem{BCRG94}
O. Bang, P. L. Christiansen, F. If, K. O. Rasmussen, Y. B. Gaididei,
\textit{Temperature effects in a nonlinear model of monolayer Scheibe aggregates},
Phys. Rev. E., \textbf{49} (1994) 4627–4636.

\bibitem{BCRG95}
O. Bang, P. L. Christiansen, F. If, K. O. Rasmussen, Y. B. Gaididei,
\textit{White noise in the two-dimensional nonlinear Schr$\ddot{\text{o}}$dinger equation},
Appl. Anal., \textbf{57} (1995) 3–15.

\bibitem{BDR09}
V. Barbu, G. Da Prato, M. R$\ddot{\text{o}}$ckner,
\textit{Stochastic porous media equations and self-organized criticality},
Comm. Math. Phys., \textbf{285} (3) (2009) 901–923.

\bibitem{BR11}
V. Barbu, M. R$\ddot{\text{o}}$ckner,
\textit{On a random scaled porous media equation},
J. Differential Equations, \textbf{251} (9) (2011) 2494–2514.

\bibitem{BR13}
V. Barbu, M. R$\ddot{\text{o}}$ckner,
\textit{Stochastic variational inequalities and applications to the total variation flow perturbed by linear multiplicative noise},
Arch. Ration. Mech. Anal., \textbf{209} (3) (2013) 797–834.

\bibitem{BRZ14}
V. Barbu, M. R$\ddot{\text{o}}$ckner, D. Zhang,
\textit{Stochastic nonlinear Schr$\ddot{\text{o}}$dinger equations with linear multiplicative noise: rescaling approach},
J. Nonlinear Sci., \textbf{24} (3) (2014) 383-409.

\bibitem{BRZ16}
V. Barbu, M. R$\ddot{\text{o}}$ckner, D. Zhang,
\textit{Stochastic nonlinear Schr$\ddot{\text{o}}$dinger equations},
Nonlinear Anal., \textbf{136} (2016) 168-194.

\bibitem{BRZ18}
V. Barbu, M. R$\ddot{\text{o}}$ckner, D. Zhang,
\textit{Optimal bilinear control of nonlinear stochastic Schr$\ddot{\text{o}}$dinger equations driven by linear multiplicative noise},
Ann. Probab., \textbf{46} (4) (2018) 1957-1999.

\bibitem{BG09}
A. Barchielli, M. Gregoratti,
\textit{Quantum Trajectories and Measurements in Continuous Time: The Diffusive Case},
Lecture Notes Physics, \textbf{782} Springer, Berlin (2009).

\bibitem{BPP10}
A. Barchielli, C. Pellegrini, F. Petruccione,
\textit{Stochastic Schr$\ddot{\text{o}}$dinger equations with coloured noise},
Lett. J. Explor. Front. Phys. EPL., \textbf{91} (2010).

\bibitem{BD99}
A. de Bouard, A. Debussche, 
\textit{A stochastic nonlinear Schr$\ddot{\text{o}}$dinger equation with multiplicative noise}, 
Comm. Math. Phys., \textbf{205} (1999) 161–181.

\bibitem{BD03}
A. de Bouard, A. Debussche,
\textit{The stochastic nonlinear Schr$\ddot{\text{o}}$dinger equation in $H^1$},
Stoch. Anal. Appl., \textbf{21} (2003) 97–126.

\bibitem{BHM20}
Z. Brze\'zniak, F. Hornung, U. Manna,
\textit{Weak martingale solutions for the stochastic nonlinear Schr$\ddot{\text{o}}$dinger equation driven by pure jump noise},
Stoch. Partial Differ. Equ.: Anal. Comput., \textbf{8} (1) (2020) 1-53.

\bibitem{BHW19}
Z. Brze\'zniak, F. Hornung, L. Weis,
\textit{Martingale solutions for the stochastic nonlinear Schr$\ddot{\text{o}}$dinger equation in the energy space},
Probab. Theory Related Fields, \textbf{174} (3-4) (2019) 1273-1338.

\bibitem{BHW22}
Z. Brze\'zniak, F. Hornung, L. Weis,
\textit{Uniqueness of martingale solutions for the stochastic nonlinear Schr$\ddot{\text{o}}$dinger equation on 3d compact manifolds},
Stoch. Partial Differ. Equ.: Anal. Comput., (2022). https://doi.org/10.1007/s40072-022-00238-w

\bibitem{CW90}
T. Cazenave, F. B. Weissler,
\textit{The Cauchy problem for the critical nonlinear Schr\"odinger equation in $H^s$},
Nonlinear Anal., \textbf{14} (1990) 807-836.

\bibitem{D96}
S. Doi,
\textit{Remarks on the Cauchy problem for Schr$\ddot{\text{o}}$dinger-type equations},
Comm. PDE., \textbf{21} (1996) 163-178.

\bibitem{GV79}
J. Ginibre, G. Velo,
\textit{On a class of nonlinear Schr$\ddot{\text{o}}$dinger equations},
J. Funct. Anal., \textbf{32} (1979) 1-71.

\bibitem{HRZ19}
S. Herr, M. R$\ddot{\text{o}}$ckner, D. Zhang,
\textit{Scattering for stochastic nonlinear Schr$\ddot{\text{o}}$dinger equations},
Comm. Math. Phys., \textbf{368} (2) (2019) 843-884.

\bibitem{H20}
F. Hornung,
\textit{The stochastic nonlinear Schr$\ddot{\text{o}}$dinger equation in unbounded domains and non-compact manifolds},
Nonlinear Differ. Equ. Appl., \textbf{27} 40 (2020).

\bibitem{K87}
T. Kato,
\textit{On nonlinear Schr$\ddot{\text{o}}$dinger equations},
Ann. Inst. H. Poincar$\acute{\text{e}}$ Phys. Th$\acute{\text{e}}$or., \textbf{46} (1987) 113-129.

\bibitem{K89}
T. Kato,
\textit{Nonlinear Schr$\ddot{\text{o}}$dinger Equations, Schr$\ddot{\text{o}}$dinger Operators, (S$\phi$nderberg, 1988)},
Lecture Notes in Physics, \textbf{345} Springer, Berlin (1989) 218–263.

\bibitem{MMT08}
J. Marzuola, J. Metcalfe, D. Tataru,
\textit{Strichartz estimates and local smoothing estimates for asymptotically flat Schr$\ddot{\text{o}}$dinger equations},
J. Funct. Anal., \textbf{255} (6) (2008) 1497-1553.

\bibitem{SZ23}
Y. Su, D. Zhang
\textit{Construction of minimal mass blow-up solutions to rough nonlinear Schr$\ddot{\text{o}}$dinger equations},
J. Funct. Anal., \textbf{284} (5) (2023) 109796.

\bibitem{T87}
Y. Tsutsumi,
\textit{$L^2$-solutions for nonlinear Schr$\ddot{\text{o}}$dinger equations and nonlinear groups},
Funkcial. Ekvac., \textbf{30} (1987) 115-125.

\bibitem{Z14}
D. Zhang,
\textit{Stochastic nonlinear Schr$\ddot{\text{o}}$dinger equation}, (Ph.D. thesis), Universit$\ddot{\text{a}}$t Bielefeld, (2014).


\end{thebibliography}
\end{document}